\documentclass[12pt,reqno]{amsart}
\setlength{\textheight}{23cm}
\setlength{\textwidth}{16cm}
\setlength{\topmargin}{-0.8cm}
\setlength{\parskip}{0.3\baselineskip}
\hoffset=-1.4cm

\usepackage{amssymb}
\usepackage{hyperref}

\newtheorem{theorem}{Theorem}[section]
\newtheorem{lemma}[theorem]{Lemma}
\newtheorem{proposition}[theorem]{Proposition}
\newtheorem{corollary}[theorem]{Corollary}

\theoremstyle{definition}
\newtheorem{definition}[theorem]{Definition}

\numberwithin{equation}{section}

\begin{document}

\baselineskip=15.5pt

\title[Homogeneous Higgs and co-Higgs bundles]{Homogeneous Higgs and co-Higgs bundles on 
Hermitian symmetric spaces}

\author[I. Biswas]{Indranil Biswas}

\address{School of Mathematics, Tata Institute of Fundamental Research,
Homi Bhabha Road, Mumbai 400005}

\email{indranil@math.tifr.res.in}

\author[S. Rayan]{Steven Rayan}

\address{Centre for Quantum Topology and Its Applications (quanTA) and Department of Mathematics \& 
Statistics, University of Saskatchewan, McLean Hall, 106 Wiggins Road, Saskatoon, SK, S7N 5E6, 
Canada}

\email{rayan@math.usask.ca}

\subjclass[2010]{53C35, 14D20, 14J10}

\keywords{Higgs bundle, co-Higgs bundle, homogeneous bundle, Hermitian symmetric space, 
invariant connection}

\date{}

\begin{abstract}
We define homogeneous principal Higgs and co-Higgs bundles over irreducible Hermitian 
symmetric spaces of compact type. We provide a classification for each type of object up to 
isomorphism, which in each case can be interpreted as defining a moduli space.
\end{abstract}

\maketitle

\tableofcontents

\section{Introduction}

In contrast to parabolic Higgs bundles and other types of meromorphic Higgs bundles, 
\emph{co-Higgs bundles} are an extension of the theory of Higgs bundles to the Fano end of the 
Kodaira spectrum that does not require considering open varieties. As in \cite{R0}, a co-Higgs 
bundle on a complex variety $X$ is a holomorphic bundle $E$ with a \emph{co-Higgs field} 
$\theta:E\to E\otimes T^{1,0}X$ such that the section $\theta\wedge\theta
\,\in\, H^0(X,\, \mbox{End}(E)\otimes\wedge^2T^{1,0}X)$ vanishes identically. These objects bear
a formal similarity to the 
Higgs bundles of \cite{H1,H2,Si}. For a Higgs bundle, there is a \emph{Higgs field} $\theta\,:\,E\,
\longrightarrow\, E\otimes\Omega^{1,0}X$ with $\theta\wedge\theta\,=\,0$.
The symmetry condition is required in 
higher dimensions and is the analogue of the integrability condition introduced in \cite{Si}.
Higgs bundles have been investigated over the past thirty years in what is by now a large body 
of work. The interest in co-Higgs bundles is more recent. While Higgs 
bundles originate from considerations of invariance and self-duality in $4$D Yang-Mills gauge 
theory \cite{H1}, co-Higgs bundles arose out of generalized complex geometry \cite{Gu,H3} as a 
kind of limit of generalized holomorphic bundles when generalized complex structures become 
ordinary complex ones. Co-Higgs bundles and their moduli have been investigated on $\mathbb 
P^1$ \cite{R1,R3,R4,BG3}; on $\mathbb P^2$ \cite{R2}; on $\mathbb P^1\times\mathbb P^1$ \cite{VC};
on the moduli space of stable vector
bundles on a complex curve of genus at least $2$ \cite{BR}, which is a compact 
Fano variety (motivated by constructions in \cite{FW}); on Calabi-Yau manifolds \cite{BB};
and in various settings and levels of generality by \cite{Py,HM,BH1,BH2,Co1,Co2}.

In the present work, we examine both Higgs and co-Higgs bundles on compact Hermitian symmetric 
spaces $M$. Investigations in this direction for Higgs bundles had commenced in \cite{BG1,BG2}. To 
continue this, we take advantage of the identification of each such $M$ with the quotient $G/K$ 
where $G$ is the simply-connected covering of the group of holomorphic isometries of $M$ and $K$ 
is the isotropy subgroup of a fixed base point $x_0\,\in\, M$. We can recover $M$ as the orbit of 
the base point $x_0\cong K/K$ under the canonical action of $G$. We define natural notions of 
Higgs and co-Higgs bundles adapted to such a space. These are the \emph{homogeneous Higgs 
bundles} (respectively, \emph{homogeneous co-Higgs bundles}) in which the underlying bundle is
a homogeneous holomorphic principal $H$-bundle $E_H$, where $H$ 
is a connected complex group, and where $\theta$ is regarded as a global $1$-form-twisted 
section (respectively, vector-field-twisted section) of the adjoint bundle of $E_H$ that is 
furthermore invariant under the action of $G$. We proceed to classify $G$-holomorphic 
structures on smooth homogeneous principal $H$-bundles $E_H\,\longrightarrow\, M\,\cong\, G/K$.
For us, \emph{$G$-holomorphic} means that the self-map of the total space of $E_H$ induced by
the action of each $g\, \in\, G$ is holomorphic. We refer to these simply as \emph{invariant} holomorphic 
structures. In particular, we prove that every homogeneous bundle $E_H$ has a canonical invariant holomorphic 
structure and that this structure arises from a particular invariant connection on $E_H$. This 
is Theorem \ref{thm0}. We subsequently provide a Lie-algebraic characterization (in terms of 
the vanishing of a natural tri-linear map on $\mathfrak h\,=\,\mbox{Lie}(H)$) for when an invariant 
almost complex structure on the bundle is integrable (see Proposition \ref{prop1}). Taken 
together, these statements results in a completely Lie-theoretic characterization of isomorphism 
classes of (based) homogeneous, invariant holomorphic principal-$H$ bundles in Theorem 
\ref{thm1}.

This analysis of invariant connections and integrable holomorphic structures on smooth 
bundles $E_H$ enables us to classify homogeneous principal (co-)Higgs bundles on $G/K$ up to 
isomorphism using exclusively Lie-theoretic data (see Theorem \ref{thm2}, Corollary 
\ref{cor2}, Theorem \ref{prop2} and Corollary \ref{cor3}). The resulting descriptions 
can be interpreted as equations for defining algebraic moduli varieties within linear spaces 
arising directly from $G$ and $H$.

\section{Homogeneous bundles on Hermitian symmetric spaces}\label{sec2}

Let $M$ be an irreducible Hermitian symmetric space of compact type. Fix a base point $x_0\, \in\, M$.
Let $G$ denote the simply-connected covering of the group of holomorphic isometries of $M$. Therefore,
$G$ has a tautological action on $M$. Let $K\,<\,G$ be the isotropy of the point $x_0$. Consequently,
we have $M\,=\, G/K$, and the point $x_0$ corresponds to $K/K\, \in\, G/K$. We shall identify $(M,\,
x_0)$ with $(G/K,\, K/K)$ by considering $M$ as the orbit of $x_0$ under the tautological action of
$G$ on $M$. This identification between $(M,\,x_0)$ and $(G/K,\, K/K)$ takes the action of any
$g\,\in\, G$ on $M$ to the left--translation action
\begin{equation}\label{wtg}
t_g\, :\, G/K\, \longrightarrow\, G/K\, ,\ \ g'K\, \longmapsto\, gg'K\,.
\end{equation}
Note that $t_g$ is an holomorphic isometry.

The quotient map
\begin{equation}\label{f1}
q\,:\, G\, \longrightarrow\, G/K
\end{equation}
defines a $C^\infty$ principal $K$--bundle over $G/K$. This principal $K$--bundle over
$G/K$ will be denoted by $\mathbb G$.

\begin{definition}\label{def-2}
Let $\mathcal H$ be a Lie group.
A \textit{homogeneous} $C^\infty$ principal $\mathcal H$--bundle
over $G/K$ is a pair of the form $(E_{\mathcal H},\, \rho)$, where
$f\, :\, E_{\mathcal H}\, \longrightarrow\, G/K$ is a $C^\infty$ principal
$\mathcal H$--bundle and
$$
\rho\, :\, G\times E_{\mathcal H}\, \longrightarrow\, E_{\mathcal H}
$$
is a $C^\infty$ left--action of the group $G$ on $E_{\mathcal H}$ satisfying the
following two conditions:
\begin{enumerate}
\item $f(\rho (g,\, z))\, =\, t_g(f(z))$, for all $g\, \in\, G$,
$z\, \in\, E_{\mathcal H}$, where $t_g$ is
the automorphism of $G/K$ in \eqref{wtg}, and

\item the actions of $G$ and $\mathcal H$ on $E_{\mathcal H}$ commute.
\end{enumerate}
\end{definition}

Therefore, a homogeneous $C^\infty$ principal $\mathcal H$--bundle $E_{\mathcal H}$
is equipped with an action of $G\times \mathcal H$, with $G$ acting on the left of
$E_{\mathcal H}$ and ${\mathcal H}$ acting on its right.

An \textit{isomorphism} between two homogeneous $\mathcal H$--bundles
$$
f\, :\, E_{\mathcal H}\, \longrightarrow\, G/K\ \ \text{ and } \ \
f'\, :\, E'_{\mathcal H}\, \longrightarrow\, G/K
$$
is a diffeomorphism
$$
\delta\, :\, E_{\mathcal H}\, \longrightarrow\,E'_{\mathcal H}
$$
that satisfies the following two conditions:
\begin{itemize}
\item $\delta$ is $G\times \mathcal H$--equivariant for the
actions of $G\times \mathcal H$ on $E_{\mathcal H}$
and $E'_{\mathcal H}$, and

\item $f'\circ\delta\,=\, f$.
\end{itemize}

Two homogeneous $C^\infty$ principal bundles are called \textit{isomorphic} if there is
an isomorphism between them.

A {\it based} homogeneous $C^\infty$ principal $\mathcal H$--bundle
over $G/K$ is a homogeneous $C^\infty$ principal $\mathcal H$--bundle
$(E_{\mathcal H},\, \rho)$ over $G/K$ together with a point
$z\,\in\, (E_{\mathcal H})_{K/K}$ in the fiber of $E_{\mathcal H}$ over
the point $K/K$. An \textit{isomorphism}
between two based homogeneous $C^\infty$ principal $\mathcal H$--bundles
$(E_{\mathcal H},\, \rho,\, z)$ and $(E'_{\mathcal H},\, \rho',\, z')$ is
an isomorphism of homogeneous $C^\infty$ principal bundles 
$$
\delta\, :\, E_{\mathcal H}\, \longrightarrow\,E'_{\mathcal H}
$$
such that $\delta(z)\,=\, z'$.

Note that the left--translation action of $G$ on itself makes the
principal $K$--bundle $\mathbb G$ in \eqref{f1} a 
homogeneous principal $K$--bundle. The identity element $e$ of $G\,=\,\mathbb G$
makes it a based homogeneous principal $K$--bundle.

\begin{lemma}\label{lem1}
The based homogeneous principal $\mathcal H$--bundles
on $G/K$ are in bijection with the
homomorphisms from $K$ to $\mathcal H$.
\end{lemma}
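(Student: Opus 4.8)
The plan is to construct the bijection explicitly in both directions and then check the two composites are the identity. Given a homomorphism $\phi\,:\,K\,\longrightarrow\,\mathcal H$, I would form the associated bundle $E_{\mathcal H}^\phi\,:=\,\mathbb G\times^K \mathcal H\,=\,(G\times\mathcal H)/K$, where $K$ acts on $G$ on the right by translation and on $\mathcal H$ via $\phi$ (that is, $k\cdot(g,\,h)\,=\,(gk,\,\phi(k)^{-1}h)$). The left-translation action of $G$ on the first factor descends to a $G$-action on $E_{\mathcal H}^\phi$ commuting with the residual right $\mathcal H$-action, so this is a homogeneous principal $\mathcal H$-bundle; the base point is taken to be $z_\phi\,:=\,[e,\,e_{\mathcal H}]$, the class of $(e,\,e_{\mathcal H})$, which lies in the fiber over $K/K$. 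This produces a based homogeneous $\mathcal H$-bundle from $\phi$.

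Conversely, given a based homogeneous principal $\mathcal H$-bundle $(E_{\mathcal H},\,\rho,\,z)$ with $z\,\in\,(E_{\mathcal H})_{K/K}$, I would extract a homomorphism as follows. For each $k\,\in\,K$ the diffeomorphism $\rho(k,\,-)$ fixes the fiber over $x_0\,=\,K/K$ set-wise (since $t_k$ fixes $x_0$), and it commutes with the right $\mathcal H$-action on that fiber; since $\mathcal H$ acts simply transitively on the fiber, there is a unique element $\phi_E(k)\,\in\,\mathcal H$ with $\rho(k,\,z)\,=\,z\cdot\phi_E(k)$. One checks, using the associativity of the $G$-action and the fact that $G$ and $\mathcal H$ actions commute, that $\phi_E\,:\,K\,\longrightarrow\,\mathcal H$ is a group homomorphism, and smoothness follows from smoothness of $\rho$ together with a local section of $f$ near $x_0$.

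It then remains to verify that these two assignments are mutually inverse. Starting from $\phi$ and computing $\phi_{E^\phi}$: we have $\rho(k,\,[e,\,e_{\mathcal H}])\,=\,[k,\,e_{\mathcal H}]\,=\,[e,\,\phi(k)]\,=\,[e,\,e_{\mathcal H}]\cdot\phi(k)$, so $\phi_{E^\phi}(k)\,=\,\phi(k)$, as required. For the other composite, given $(E_{\mathcal H},\,\rho,\,z)$ with extracted homomorphism $\phi_E$, the map $[g,\,h]\,\longmapsto\,\rho(g,\,z)\cdot h$ is well-defined (the $K$-relation $[gk,\,\phi_E(k)^{-1}h]\,\longmapsto\,\rho(gk,\,z)\cdot\phi_E(k)^{-1}h\,=\,\rho(g,\,\rho(k,\,z))\cdot\phi_E(k)^{-1}h\,=\,\rho(g,\,z\cdot\phi_E(k))\cdot\phi_E(k)^{-1}h\,=\,\rho(g,\,z)\cdot h$ since $G$- and $\mathcal H$-actions commute), it is $G\times\mathcal H$-equivariant, covers the identity on $G/K$, and sends $[e,\,e_{\mathcal H}]$ to $z$; hence it is an isomorphism of based homogeneous $\mathcal H$-bundles. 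Finally I would remark that isomorphic based bundles yield the same homomorphism, so the correspondence is genuinely a bijection on isomorphism classes. The only mildly delicate point — the main obstacle, such as it is — is checking smoothness of $\phi_E$ and of the inverse map in both directions; this is handled by choosing a smooth local section of $q\,:\,G\,\longrightarrow\,G/K$ through $e$ and using it to trivialize $E_{\mathcal H}$ near $x_0$.
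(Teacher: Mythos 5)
Your proposal is correct and follows essentially the same route as the paper: the associated-bundle construction $(G\times\mathcal H)/K$ in one direction, extraction of $\eta$ from $\rho(k,\,z)\,=\,z\cdot\eta(k)$ in the other, and the explicit map $[g,\,h]\,\longmapsto\,\rho(g,\,z)\cdot h$ to verify the composites are the identity. The extra remarks on smoothness and on invariance under isomorphism of based bundles are fine details the paper leaves implicit.
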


\begin{proof}
First take a homomorphism $\eta\, :\, K\, \longrightarrow\, {\mathcal H}$.
Consider the principal $K$--bundle $\mathbb G$ in \eqref{f1}. Let
${\mathbb G}_\eta$ be the principal $\mathcal H$--bundle on $G/K$ obtained
by extending the structure group of $\mathbb G$ using $\eta$. So,
${\mathbb G}_\eta$ is the quotient of $G\times \mathcal H$ where
two elements $(g_1, \, h_1),\, (g_2, \, h_2)\,\in\, G\times \mathcal H$ are
identified if there is an element $k\, \in\, K$ such that
$g_2\,=\, g_1k$ and $h_2\,=\, \eta(k)^{-1}h_1$. Consider the action of $G$ on
$G\times \mathcal H$ given by the left--translation action
of $G$ on itself and the trivial action of $G$ on $\mathcal H$. This action
of $G$ on $G\times \mathcal H$ produces an action
of $G$ on ${\mathbb G}_\eta$. The resulting action of $G$ on ${\mathbb G}_\eta$
makes ${\mathbb G}_\eta$ a homogeneous principal $\mathcal H$--bundle
on $G/K$. The image in ${\mathbb G}_\eta$ of the point $e\times e_{\mathcal H}\,\in\,
G\times {\mathcal H}$, where $e_{\mathcal H}$ is the identity element of $\mathcal H$,
is a point in the fiber of ${\mathbb G}_\eta$ over $K/K$, so $({\mathbb G}_\eta,\,
e\times e_{\mathcal H})$ is a based homogeneous principal $\mathcal H$--bundle.

For the converse, take a homogeneous principal $\mathcal H$--bundle
$(E_{\mathcal H},\, \rho)$ on $G/K$ together with a point $z_0$ in the
fiber of $E_{\mathcal H}$ over the point $K/K\, \in\, G/K$.
For any element $k\, \in\, K$, let $\eta(k)\, \in\, \mathcal H$ be the unique
element that satisfies the equation
\begin{equation}\label{se}
\rho(k,\, z_0)\,=\, z_0\eta(k)\, ;
\end{equation}
note that since $\rho(k,\, z_0)$ lies in the fiber of $E_{\mathcal H}$ over the
point $K/K\, \in\, G/K$, there is a unique such $\eta(k)$. Now, for $k,\, k'\,\in\, K$, we have
$$
z_0\eta(kk')\,=\, \rho(kk',\, z_0)\,=\, \rho(k,\, \rho(k',\, z_0))\,=\,
\rho(k,\, z_0\eta(k'))
$$
$$
=\, \rho(k,\, z_0)\eta(k')\,=\, z_0\eta(k)\eta(k')\, .
$$
This implies that the map
\begin{equation}\label{eta}
\eta\, :\, K\, \longrightarrow\, {\mathcal H}\, ,\ \ k\, \longmapsto\, \eta(k)\, ,
\end{equation}
is a homomorphism of groups.

For the above homomorphism $\eta$ consider the based principal $\mathcal H$--bundle
$({\mathbb G}_\eta,\, e\times e_{\mathcal H})$ constructed earlier from $\eta$. We shall show
that $({\mathbb G}_\eta,\, e\times e_{\mathcal H})$ is identified with
$(E_{\mathcal H},\, z_0)$. For this consider the map
$$
\eta'\, :\, G\times {\mathcal H}\, \longrightarrow\, E_{\mathcal H}
$$
that sends any $(g,\, h)\, \in\, G\times {\mathcal H}$ to $\rho(g,\, z_0h)$. It can be
shown that $\eta'$ descends to a map
\begin{equation}\label{dm}
\eta''\, :\, {\mathbb G}_\eta\, \longrightarrow\, E_{\mathcal H}
\end{equation}
from the quotient ${\mathbb G}_\eta$ of $G\times {\mathcal H}$. Indeed,
for any $(k,\, g,\, h)\, \in\, K\times G\times {\mathcal H}$, we have
$$
\eta'(gk,\, \eta(k)^{-1}h)\,=\, \rho(gk,\, z_0\eta(k)^{-1}h) \,=\, \rho(g,\,
\rho(k,\, z_0))\eta(k)^{-1}h
$$
$$
=\, \rho(g,\, z_0\eta(k))\eta(k)^{-1}h \,=\,
\rho(g,\, z_0)\eta(k)\eta(k)^{-1}h\,=\, \rho(g,\, z_0)h\,=\, \rho(g,\, z_0h)\,=\, \eta'(g,\, h)\, .
$$
Therefore, $\eta'$ descends to a map $\eta''$ as in \eqref{dm}.
This map $\eta''$ in \eqref{dm} is an isomorphism of homogeneous principal $\mathcal H$--bundles.
Note that $\eta''$ clearly sends $e\times e_{\mathcal H}$ to $z_0$.

Conversely, take a homomorphism $\eta\, :\, K\, \longrightarrow\, {\mathcal H}$. Let 
$({\mathbb G}_\eta,\,e\times e_{\mathcal H})$ be the based homogeneous principal $\mathcal 
H$--bundle constructed as above using it. Then the homomorphism $K\, \longrightarrow\, {\mathcal H}$
constructed as in \eqref{se} for this based homogeneous principal $\mathcal H$--bundle clearly
coincides with $\eta$.

Consequently, the above two constructions, between
the space of homomorphisms from $K$ to $\mathcal H$ and the space of based homogeneous
$\mathcal H$--bundles, are inverses of each other.
\end{proof}

\section{A tautological connection}\label{se2.2}

A connection on a principal ${\mathcal H}$--bundle $f\, :\, E_{\mathcal H}\, \longrightarrow\,
G/K$ is a $C^\infty$ ${\mathcal H}$--invariant distribution ${\mathbb D}\, \subset\, TE_{\mathcal H}$
such that the natural homomorphism ${\mathbb D}\oplus \text{kernel}(df)\, \longrightarrow\,
TE_{\mathcal H}$ is an isomorphism, where $df\, :\, TE_{\mathcal H}\, \longrightarrow\, f^*T(G/K)$
is the differential of $f$ \cite{At}, \cite{KN}.

Take a homogeneous principal $\mathcal H$--bundle
$(E_{\mathcal H},\, \rho)$ on $G/K$.
For any $g\, \in\, G$, let $\rho_g$ be the diffeomorphism of $E_{\mathcal H}$
defined by $z\, \longmapsto\, \rho(g,\, z)$. This $\rho_g$ is $\mathcal H$--equivariant;
more precisely, it is an automorphism
of the principal $\mathcal H$--bundle $E_{\mathcal H}$ over the biholomorphism
$t_g$ of $G/K$ defined in \eqref{wtg}. Let $C(E_{\mathcal H})$ denote the
space of all connections on the principal $\mathcal H$--bundle $E_{\mathcal H}$. The group
$G$ acts on $C(E_{\mathcal H})$ as follows: the action of any $g\, \in\, G$ sends
the connection defined by a distribution ${\mathbb D}\, \subset\, TE_{\mathcal H}$
to the connection $$d\rho_g({\mathbb D})\, \subset\, TE_{\mathcal H}\, ,$$ where
$d\rho_g\, :\, TE_{\mathcal H}\, \longrightarrow\, TE_{\mathcal H}$ is the differential
of the map $\rho_g$. Let
\begin{equation}\label{f2}
C(E_{\mathcal H})^G\,\subset\, C(E_{\mathcal H})
\end{equation}
be the fixed point locus for this action of $G$ on $C(E_{\mathcal H})$.

Consider the homogeneous principal $K$--bundle $\mathbb G$ in \eqref{f1}.
We shall show that it has a tautological $G$--invariant connection.

Let $\mathfrak g$ (respectively, $\mathfrak k$) be the Lie algebra of
$G$ (respectively, $K$). Both $\mathfrak g$ and $\mathfrak k$ are $K$--modules
by the adjoint action. The Killing form on $\mathfrak g$ is non-degenerate.
Let
$$
{\mathfrak p}\, :=\, {\mathfrak k}^\perp \,\subset\, \mathfrak g
$$
be the orthogonal complement of $\mathfrak k$ for the Killing form on $\mathfrak g$.
The adjoint action 
of $K$ on $\mathfrak g$ preserves $\mathfrak p$, because the Killing form on $\mathfrak g$
is $K$--invariant. Therefore, the natural
homomorphism
\begin{equation}\label{f3}
{\mathfrak k}\oplus {\mathfrak p}\,\longrightarrow\, \mathfrak g
\end{equation}
is an isomorphism of $K$--modules.

Now the translations of ${\mathfrak p}$ by the
left--translation action of $G$ on itself define a distribution
$$
D\, \subset\, TG\, .
$$
This $D$ is preserved by the right--translation action of $K$ on $G$ because
the decomposition in \eqref{f3} is an isomorphism of $K$--modules. From this it follows that
$D$ defines a connection on the principal $K$--bundle $\mathbb G$ in \eqref{f1}. This
connection on $\mathbb G$ will be denoted by $\nabla^0$. Since $D$ is preserved
by the left--translation action of $G$ on itself, we conclude that
\begin{equation}\label{f4}
\nabla^0\, \in\, C({\mathbb G})^G
\end{equation}
(see \eqref{f2}).

Consider the Lie bracket operation composed with the projection to the direct summand $\mathfrak k$
in \eqref{f3}
\begin{equation}\label{g1}
{\mathfrak p}\otimes {\mathfrak p}\, \longrightarrow\, {\mathfrak g}
\, \longrightarrow\, {\mathfrak k}\, .
\end{equation}
The tangent bundle $T(G/K)$ is the vector bundle over $G/K$
associated to the principal $K$--bundle $\mathbb G$
in \eqref{f1} for the adjoint action of $K$ on $\mathfrak p$. Therefore, the composition
homomorphism in \eqref{g1}, which is $K$--equivariant, define a
$C^\infty$ two--form on $G/K$ with values in the adjoint vector bundle $\text{ad}({\mathbb G})$.
This $\text{ad}({\mathbb G})$--valued two--form on $G/K$ is the curvature of
the above connection $\nabla^0$. We shall denote the
curvature of $\nabla^0$ by ${\mathcal K}(\nabla^0)$.

The center of $K$ will be denoted by $Z_K$; it is isomorphic to ${\rm U}(1)$, because
the Hermitian symmetric space $G/K$ is irreducible. Consider the action of $Z_K$ on
the complexification of $\mathfrak p$. Let
\begin{equation}\label{d1}
{\mathfrak p}^{\mathbb C}\, :=\,
{\mathfrak p}\otimes_{\mathbb R} {\mathbb C}\,=\, {\mathfrak p}_+\oplus {\mathfrak p}_{-}
\end{equation}
be the isotypical decomposition for the action of $Z_K$ on ${\mathfrak p}^{\mathbb C}$. Note
that $${\mathfrak p}^{\mathbb C}\,=\, T_{K/K}(G/K)\otimes_{\mathbb R} {\mathbb C}\, .$$
The type decomposition given by the complex structure on $G/K$
$$
T_{K/K}(G/K)\otimes_{\mathbb R} {\mathbb C}\,=\, T^{1,0}_{K/K}(G/K)\oplus T^{0,1}_{K/K}(G/K)
$$
coincides with the decomposition in \eqref{d1}; the complex subspace
${\mathfrak p}_+$ (respectively, ${\mathfrak p}_{-}$) of ${\mathfrak p}\otimes_{\mathbb R}
{\mathbb C}$ coincides with $T^{1,0}_{K/K}(G/K)$ (respectively, $T^{0,1}_{K/K}(G/K)$) by the
above isomorphism ${\mathfrak p}^{\mathbb C}\,=\, T_{K/K}(G/K)\otimes_{\mathbb R} {\mathbb C}$.
(See \cite{He} for the details.)

The complexification of the composition in \eqref{g1} vanishes on ${\mathfrak p}_+
\otimes {\mathfrak p}_+$ and ${\mathfrak p}_{-}\otimes {\mathfrak p}_{-}$. From this
it follows immediately that both the
$(2,\, 0)$ and $(0,\, 2)$ type components of the curvature ${\mathcal K}(\nabla^0)(K/K)$
of $\nabla^0$ vanish (at the point $K/K\, \in\, G/K$).

Note that from the fact that the connection $\nabla^0$ is $G$--invariant (see \eqref{f4}) it 
follows immediately that the curvature ${\mathcal K}(\nabla^0)$ is preserved by the action of 
$G$. Hence ${\mathcal K}(\nabla^0)$ is an $\text{ad}({\mathbb G})$--valued form of
of Hodge type $(1, \,1)$ on $G/K$, because it is of type 
$(1, \,1)$ at the point $K/K\, \in\, G/K$ and the action of $G$ on $G/K$ is transitive.

\section{Invariant holomorphic structures}

Let $H$ be a connected complex Lie group.
The Lie algebra of $H$ will be denoted by $\mathfrak h$.

A holomorphic structure on a $C^\infty$ principal $H$--bundle
\begin{equation}\label{prf}
f\, :\, E_H\, \longrightarrow\, G/K
\end{equation}
is a complex structure on the manifold $E_H$ such that the projection $f$ is holomorphic and 
the map $E_H\times H\, \longrightarrow\, E_H$ giving the action of $H$ on $E_H$ is holomorphic. A 
holomorphic principal $H$--bundle is a $C^\infty$ principal $H$--bundle equipped with a holomorphic 
structure.

Now let $E_H$ be homogeneous. A holomorphic structure on $E_H$ is called {\it invariant} if for
every $g\, \in\, G$ the self-map of $E_H$ given by the action of $g$ on it is holomorphic.

\begin{theorem}\label{thm0}
Any $C^\infty$ homogeneous principal $H$--bundle $E_H\, \longrightarrow\, G/K$ has a tautological
invariant connection $\nabla^{E_H}\, \in\, C(E_H)^G$ (defined in \eqref{f2}). This connection $\nabla^{E_H}$
produces an invariant holomorphic structure on $E_H$.
\end{theorem}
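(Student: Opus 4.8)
The plan is to reduce to the model bundles of Lemma~\ref{lem1}, transport the tautological connection $\nabla^0$ of $\mathbb G$ across an extension of structure group, and then invoke the standard fact that a connection whose curvature has vanishing $(0,\,2)$ component defines a holomorphic structure.

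First I would fix a point $z_0$ in the fibre of $E_H$ over $K/K\,\in\, G/K$. By Lemma~\ref{lem1} this identifies the based homogeneous bundle $(E_H,\, z_0)$ with $({\mathbb G}_\eta,\, e\times e_H)$, where $\eta\,:\,K\,\longrightarrow\, H$ is the homomorphism determined by \eqref{se}. Since an isomorphism of homogeneous bundles carries invariant connections to invariant connections and invariant holomorphic structures to invariant holomorphic structures, it suffices to produce the required data on ${\mathbb G}_\eta$. Recall that ${\mathbb G}_\eta$ is the quotient of $G\times H$ by the right $K$--action $(g,\,h)\,\longmapsto\,(gk,\,\eta(k)^{-1}h)$. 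The distribution $D\,\subset\, TG$ defining $\nabla^0$ is preserved by the right $K$--translations and by the left $G$--translations on $G$; hence the distribution $D\oplus 0\,\subset\, TG\oplus TH\,=\,T(G\times H)$ is preserved by the above $K$--action (which only left--translates the $H$--factor), by the right $H$--action on the second factor, and by the left $G$--action on the first factor, and it meets the tangent spaces to the $K$--orbits only at the origin. Consequently its image under the quotient map $G\times H\,\longrightarrow\,{\mathbb G}_\eta$ is a well-defined $C^\infty$ distribution on ${\mathbb G}_\eta$ that is $H$--invariant and complementary to the vertical subbundle of ${\mathbb G}_\eta\,\longrightarrow\, G/K$; this distribution defines a connection $\nabla^{E_H}$, which by construction is $G$--invariant, so $\nabla^{E_H}\,\in\, C(E_H)^G$ (see \eqref{f2}). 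Equivalently, $\nabla^{E_H}$ is the connection induced from $\nabla^0$ by the extension of structure group $\eta$. Finally, $\nabla^{E_H}$ does not depend on the auxiliary choice of $z_0$: replacing $z_0$ by $z_0 h_0$ with $h_0\,\in\, H$ translates the horizontal subspace at $z_0$ by $h_0$, which is precisely the relation forced by the $H$--invariance of a connection; so $\nabla^{E_H}$ is genuinely tautological.

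The curvature of $\nabla^{E_H}$ is the image of the curvature $\mathcal K(\nabla^0)$ under the bundle homomorphism $\mathrm{ad}({\mathbb G})\,\longrightarrow\,\mathrm{ad}({\mathbb G}_\eta)$ induced by the $K$--equivariant Lie algebra map $d\eta\,:\,{\mathfrak k}\,\longrightarrow\,{\mathfrak h}$; this follows from the structure equation together with the Maurer--Cartan equation on $H$. Because this homomorphism acts on the coefficients only and not on the form part, it preserves the Hodge bidegree of forms on $G/K$. As $\mathcal K(\nabla^0)$ is of Hodge type $(1,\,1)$ by the discussion in Section~\ref{se2.2}, it follows that $\mathcal K(\nabla^{E_H})$ is of type $(1,\,1)$ as well; in particular its $(0,\,2)$ component vanishes.

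It remains to pass from the connection to a holomorphic structure. Any connection on a $C^\infty$ principal $H$--bundle $f\,:\,E_H\,\longrightarrow\, G/K$, with $H$ a complex Lie group, endows the total space $E_H$ with an almost complex structure: on the horizontal subbundle one transports the complex structure of $G/K$ via $df$, while on the vertical subbundle, which is canonically isomorphic to $E_H\times{\mathfrak h}$ via the fundamental vector fields, one takes the complex structure of ${\mathfrak h}$. A Nijenhuis-tensor computation shows that this almost complex structure is integrable if and only if the $(0,\,2)$ component of the curvature vanishes: the vertical and mixed components of the Nijenhuis tensor vanish because $H$ is complex and the vertical subbundle is involutive, while the horizontal component is controlled by the Nijenhuis tensor of $G/K$, which vanishes, together with the $(2,\,0)$ and $(0,\,2)$ parts of the curvature. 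When this almost complex structure is integrable, the resulting complex structure on $E_H$ makes $f$ holomorphic and the $H$--action holomorphic, hence is a holomorphic structure on the principal bundle $E_H$; this is the principal-bundle form of the Koszul--Malgrange correspondence (cf.\ \cite{At,KN}). Applying this to $\nabla^{E_H}$, whose curvature is of type $(1,\,1)$, produces a holomorphic structure on $E_H$. This structure is invariant: since $\nabla^{E_H}$ is $G$--invariant, each self-map $\rho_g$ of $E_H$ preserves the horizontal subbundle, and since $t_g$ is a biholomorphism of $G/K$ while $\rho_g$ commutes with the $H$--action, $d\rho_g$ preserves the almost complex structure on both the horizontal and the vertical subbundles; hence every $\rho_g$ is holomorphic. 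The step I expect to require the most care is the precise identification of the extended connection $\nabla^{E_H}$ and the verification that its curvature remains of Hodge type $(1,\,1)$; once this is granted, the existence of the invariant holomorphic structure is exactly the standard curvature criterion.
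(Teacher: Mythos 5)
Your proposal is correct and follows essentially the same route as the paper: identify $\nabla^{E_H}$ with the connection induced from $\nabla^0$ under the extension of structure group by $\eta$, observe that its curvature is the image of $\mathcal K(\nabla^0)$ and hence of type $(1,\,1)$, and invoke the curvature criterion of \cite[p.~9, Proposition 3.7]{Ko} together with $G$--invariance of the connection. The only (inessential) difference is that the paper first defines the horizontal distribution intrinsically as $\delta({\mathfrak p})\subset TE_H$ via the infinitesimal $G$--action --- which makes base-point independence automatic --- and only afterwards identifies it with the induced connection on ${\mathbb G}_\eta$, whereas you build it directly on the model and then check independence of $z_0$.
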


\begin{proof}
The action of $G$ on $E_H$ produces a Lie algebra homomorphism
$$
\delta\, :\, \text{Lie}(G)\,=:\, {\mathfrak g}\, \longrightarrow\, C^\infty(E_H,\, TE_H)
$$
to the $C^\infty$ vector fields on $E_H$. Let
\begin{equation}\label{cd}
{\mathfrak D}\, :=\, \delta({\mathfrak p})\, \subset\, TE_H
\end{equation}
be the distribution defined by the image of the subspace ${\mathfrak p}$ in 
\eqref{f3}. Since the actions of $G$ and $H$ on $E_H$ commute, for any $v\, \in\, \mathfrak g$,
the above defined
vector field $\delta(v)$ on $E_H$ is preserved by the action of $H$ on $E_H$. Consequently,
the distribution ${\mathfrak D}$ in \eqref{cd} is also
preserved by the action of $H$ on $E_H$. Clearly, ${\mathfrak D}$ is transversal to the fibers 
of the projection $f$ in \eqref{prf}. Hence ${\mathfrak D}$ defines a connection on
the principal $H$--bundle $E_H$; this 
connection on $E_H$ will be denoted by $\nabla^{E_H}$. We have
$$\nabla^{E_H}\, \in\, C(E_H)^G\, ,$$ because 
the distribution ${\mathfrak D}$ is preserved by the action of $G$ on $E_H$.

Consider the connection $\nabla^0$ on the principal $K$--bundle ${\mathbb G}$ constructed in 
\eqref{f4}. From the proof of Lemma \ref{lem1} we know that the principal $H$--bundle $E_H$ is 
the extension of the structure group of the principal $K$--bundle ${\mathbb G}$ in \eqref{f1} 
using a homomorphism $K\, \longrightarrow\, H$. Consequently, the connection $\nabla^0$ on 
$\mathbb G$ induces a connection on $E_H$. From the construction of the above connection 
$\nabla^{E_H}$ on $E_H$ it is evident that $\nabla^{E_H}$ coincides with the connection on $E_H$ 
induced by $\nabla^0$.

It was shown in Section \ref{se2.2} that the curvature ${\mathcal K}(\nabla^0)$ of $\nabla^0$ is 
of type $(1,\, 1)$. This implies that the curvature of the induced connection $\nabla^{E_H}$ on 
$E_H$ is also of type $(1,\, 1)$, because the curvature of the induced connection $\nabla^{E_H}$ 
is induced by the curvature of $\nabla^0$. Therefore, $\nabla^{E_H}$ produces a holomorphic 
structure on the principal $H$--bundle $E_H$ \cite[p.~9, Proposition 3.7]{Ko}. This holomorphic 
structure on $E_H$ is invariant, because $\nabla^{E_H}\, \in\, C(E_H)^G$.
\end{proof}

We will classify the space of all invariant holomorphic structures on a homogeneous principal 
$H$--bundle. For that purpose we need to consider invariant almost holomorphic structures.

An almost holomorphic structure on $E_H$ is a $C^\infty$ automorphism $J\, :\, TE_H\,\longrightarrow\, 
TE_H$ of vector bundles such that
\begin{itemize}
\item $J\circ J\, =\, -\text{Id}_{TE_H}$,

\item the projection $f$ in \eqref{prf} intertwines $J$ and the almost complex structure on 
$G/K$, meaning $f$ is an almost holomorphic map, and

\item the map $E_H\times H\, \longrightarrow\, E_H$ giving the action of $H$
on $E_H$ is almost holomorphic.
\end{itemize}
Note that each fiber of $f$ is identified with $H$
up to left-translations, and hence each fiber of $f$ has a complex structure given by the
complex structure of $H$. The above third condition implies that the complex structure
on any fiber of $f$ is the restriction of $J$.

So a holomorphic structure on the principal $H$--bundle $E_H$ is an integrable almost 
holomorphic structure on $E_H$. An almost holomorphic structure $J$ on $E_H$ will be called 
invariant if the action of $G$ on $E_H$ preserves the automorphism $J$. Note that an invariant 
holomorphic structure on $E_H$ is an integrable invariant almost holomorphic structure on $E_H$.

Giving an almost holomorphic structure on $E_H$ is equivalent to giving a complex distribution
$$D_J\, \subset\, TE_H\otimes{\mathbb C}$$ satisfying the following two conditions:
\begin{itemize}
\item the differential $$df\otimes {\mathbb C}\, :\, TE_H\otimes{\mathbb C}\, \longrightarrow\,
f^*T(G/K)\otimes{\mathbb C}$$ of the projection $f$ in \eqref{prf} maps $D_J$ isomorphically to
$T^{0,1}(G/K)\,\subset\, T(G/K)\otimes\mathbb C$, where $df$ is the differential of $f$, and

\item the distribution $D_J$ is preserved by the action of $H$ on $E_H$.
\end{itemize}
Note that the first condition implies that $D_J\bigcap\text{kernel}(df\otimes {\mathbb C})
\,=\, 0$ and $\dim D_J\,=\, \dim_{\mathbb C} G/K$.

Given a distribution $D_J$ satisfying the above two conditions, consider
$$D_J\oplus \text{kernel}(df\otimes {\mathbb C})^{0,1}\, \subset\, TE_H\otimes{\mathbb C}\, ,
$$
where $\text{kernel}(df\otimes {\mathbb C})\,=\, \text{kernel}(df\otimes {\mathbb C})^{1,0}
\oplus \text{kernel}(df\otimes {\mathbb C})^{0,1}$ is the type decomposition corresponding to
the complex structure on the fibers of $f$.
Then there is a unique almost complex structure on $E_H$ such that
the corresponding complex distribution $$T^{0,1}E_H\, \subset\, TE_H\otimes{\mathbb C}$$
is $D_J\oplus \text{kernel}(df\otimes {\mathbb C})^{0,1}$.

Let $\text{ad}(E_H)\, :=\, E_H\times^H {\mathfrak h}$ be the adjoint bundle associated to $E_H$ 
for the adjoint action of $H$ on its Lie algebra $\mathfrak h$. So sections of
$\text{ad}(E_H)$ over an open subset $U\, \subset\, G/K$ are identified with the $H$--invariant
sections of $\text{kernel}(df)$ over $f^{-1}(U)$.

Let $D_J\, \subset\, TE_H\otimes{\mathbb C}$ be a distribution giving an almost
complex structure on $E_H$. For any $C^\infty$ section $s$ of
$T^{0,1}(G/K)$ defined over an open subset $U\, \subset\, G/K$, let $\widehat{s}$ be
the unique $C^\infty$ section of $D_J$ over $f^{-1}(U)\, \subset\, E_H$ such that
$(df\otimes {\mathbb C}) (\widehat{s})\,=\, f^*s$. Then for any two $C^\infty$
sections $s$ and $t$ of $T^{0,1}(G/K)$ over $U$, the section
\begin{equation}\label{cc}
{\mathcal K}(D_J)(s,\, t)\, :=\, [\widehat{s},\, \widehat{t}]- \widehat{[s,\, t]}
\end{equation}
is an $H$--invariant section of $\text{kernel}(df)$ over $f^{-1}(U)$.
Therefore, ${\mathcal K}(D_J)(s,\, t)$ produces a section of $\text{ad}(E_H)\vert_U$. It
is straightforward to check that
${\mathcal K}(D_J)(\psi\cdot s,\, t)\,=\, \psi\cdot {\mathcal K}(D_J)(s,\, t)$ for
any locally defined $C^\infty$ function $\psi$ on $G/K$, and
${\mathcal K}(D_J)(s,\, t)\,=\, - {\mathcal K}(D_J)(t,\, s)$. Consequently,
${\mathcal K}(D_J)$ is a $C^\infty$ section of $\Omega^{0,2}_{G/K}\otimes \text{ad}(E_H)$.

Note that the above distribution $D_J$ is integrable if and only if ${\mathcal K}(D_J)\,=\, 0$.
The almost complex structure on $E_H$ given by $D_J$ is integrable if and only if
${\mathcal K}(D_J)\,=\, 0$.

The space of almost holomorphic structures on $E_H$ is an affine space for the vector space 
$C^\infty (G/K, \, \text{ad}(E_H)\otimes \Omega^{0,1}_{G/K})$. Note that from Theorem \ref{thm0} 
we know that the space of almost holomorphic structures on $E_H$ is nonempty. The actions of $G$ 
on $E_H$ and $G/K$ together produce an action of $G$ on $C^\infty (G/K, \, \text{ad}(E_H)\otimes 
\Omega^{0,1}_{G/K})$. The space of invariant almost holomorphic structures on $E_H$ is an affine 
space for the vector space $C^\infty (G/K, \, \text{ad}(E_H)\otimes\Omega^{0,1}_{G/K})^G$
of $G$--invariants in $C^\infty (G/K, \, \text{ad}(E_H)\otimes\Omega^{0,1}_{G/K})$; note that the 
space of invariant almost holomorphic structures on $E_H$ is nonempty, because the almost 
holomorphic structures on $E_H$ given by Theorem \ref{thm0} is invariant.

Since the translation action of $G$ on $G/K$ is transitive, the evaluation map
\begin{equation}\label{ep}
\epsilon\, :\, 
C^\infty (G/K, \, \text{ad}(E_H)\otimes \Omega^{0,1}_{G/K})^G\, \longrightarrow\, (\text{ad}(E_H)\otimes \Omega^{0,1}_{G/K})_{K/K}
\, ,\ \ s\, \longmapsto\, s(K/K)
\end{equation}
is injective.

Fix a point
\begin{equation}\label{z0}
z_0\, \in\, (E_H)_{K/K}\, ,
\end{equation}
so $(E_H,\, z_0)$ is a based homogeneous bundle. The map
$$H\, \longmapsto\, (E_H)_{K/K}\, , \ \ h\, \longmapsto\, z_0h
$$
identifies the fiber $(E_H)_{K/K}$ with $H$. Recall that
the fiber $\text{ad}(E_H)_{K/K}$ is a quotient of 
$(E_H)_{K/K}\times\mathfrak h$. The map $\mathfrak h\, \longrightarrow\,\text{ad}(E_H)_{K/K}$
that sends any $v\, \in\, \mathfrak h$ to the element of $\text{ad}(E_H)_{K/K}$
given by $(z_0,\, v)$, where $z_0$ is the element in \eqref{z0}, identifies $\mathfrak h$ 
with $\text{ad}(E_H)_{K/K}$. On the other hand, the two vector spaces $(T^{0,1}_{K/K}
(G/K))^*\,=\, (\Omega^{0,1}_{G/K})_{K/K}$ and
$T^{1,0}_{K/K}(G/K)$ are identified using the K\"ahler form on $G/K$, and hence
$(\Omega^{0,1}_{G/K})_{K/K}$ is identified with ${\mathfrak p}_+$ defined in \eqref{d1} (recall
that $T^{1,0}_{K/K}(G/K)$ is identified with ${\mathfrak p}_+$). So we have
\begin{equation}\label{z1}
(\Omega^{0,1}_{G/K})_{K/K}\,=\, {\mathfrak p}_+\, .
\end{equation}
Therefore, the map $\epsilon$ is \eqref{ep} is in fact an injective map 
\begin{equation}\label{ep2}
\epsilon\, :\, 
C^\infty (G/K, \, \text{ad}(E_H)\otimes \Omega^{0,1}_{G/K})^G\, \longrightarrow\,
{\mathfrak h}\otimes {\mathfrak p}_+\, .
\end{equation}

From Theorem \ref{thm0} we know that $E_H$ has a tautological invariant holomorphic structure. 
Since the space of almost holomorphic structures (respectively, invariant almost holomorphic 
structures) on $E_H$ is an affine space for $C^\infty (G/K, \, \text{ad}(E_H)\otimes 
\Omega^{0,1}_{G/K})$ (respectively, $C^\infty (G/K, \, \text{ad}(E_H)\otimes 
\Omega^{0,1}_{G/K})^G$), using this holomorphic structure on $E_H$ given by Theorem \ref{thm0} 
as the base point, the space of almost holomorphic structures (respectively, invariant almost 
holomorphic structures) on $E_H$ gets identified with $C^\infty (G/K, \, \text{ad}(E_H)\otimes 
\Omega^{0,1}_{G/K})$ (respectively, $C^\infty (G/K, \, \text{ad}(E_H)\otimes 
\Omega^{0,1}_{G/K})^G$).

The Lie algebra operation ${\mathfrak h}\otimes {\mathfrak h}\,\longrightarrow\,
{\mathfrak h}$ and the exterior multiplication
$$
{\mathfrak p}_+\otimes {\mathfrak p}_+ \, \longrightarrow\, \bigwedge\nolimits^2 {\mathfrak p}_+
$$
together define a homomorphism
\begin{equation}\label{mpl}
{\mathbf m}_+\, :\, ({\mathfrak h}\otimes {\mathfrak p}_+)^{\otimes 2}\,\longrightarrow
\, {\mathfrak h}\otimes \bigwedge\nolimits^2 {\mathfrak p}_+\, .
\end{equation}

\begin{proposition}\label{prop1}
Take an invariant almost holomorphic structure $$\beta\,\in\, C^\infty (G/K, \,
{\rm ad}(E_H)\otimes \Omega^{0,1}_{G/K})^G\, .$$ Then $\beta$ is integrable if and only if
$$
{\mathbf m}_+(\epsilon (\beta)\otimes \epsilon (\beta)) \,=\, 0\, ,
$$
where $\epsilon$ and ${\mathbf m}_+$ are constructed in \eqref{ep2} and \eqref{mpl}
respectively.
\end{proposition}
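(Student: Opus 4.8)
The plan is to compute the obstruction tensor $\mathcal{K}(D_J)$ from \eqref{cc} at the point $K/K$ and match it with ${\mathbf m}_+$ applied to $\epsilon(\beta)$. Write $J_0$ for the tautological invariant holomorphic structure from Theorem \ref{thm0}, with distribution $D_0 = \delta({\mathfrak p}_-) \subset TE_H \otimes \mathbb{C}$ (the $(0,1)$-part of $\delta({\mathfrak p}^{\mathbb C})$, since $\mathfrak{D} = \delta(\mathfrak{p})$ is the horizontal distribution of the tautological connection and $\mathfrak{p}_- = T^{0,1}_{K/K}(G/K)$). The invariant almost holomorphic structure $\beta$ deforms $D_0$: its distribution $D_\beta$ consists of vectors $\delta(w) + (\text{vertical correction governed by }\beta)$ for $w \in {\mathfrak p}_-$. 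Concretely, identifying $(\Omega^{0,1}_{G/K})_{K/K} = {\mathfrak p}_+$ via \eqref{z1} and $\text{ad}(E_H)_{K/K} = \mathfrak h$ via the basepoint $z_0$, the element $\epsilon(\beta) \in {\mathfrak h}\otimes{\mathfrak p}_+$ is a linear map ${\mathfrak p}_- \to {\mathfrak h}$ (using the Kähler pairing ${\mathfrak p}_+ \cong {\mathfrak p}_-^*$), and $D_\beta$ near the fiber over $K/K$ is spanned by $\widehat w := \delta(w) + \widetilde{\epsilon(\beta)(w)}$ for $w \in {\mathfrak p}_-$, where $\widetilde{\,\cdot\,}$ denotes the vertical vector field on $E_H$ generated by an element of $\mathfrak h$ via the $H$-action.

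The key computation is the bracket $[\widehat w, \widehat{w'}]$ for $w, w' \in {\mathfrak p}_-$, evaluated and projected to the vertical subspace at $z_0$, then compared with the lift of $[w,w']$. This bracket expands into three kinds of terms: (i) $[\delta(w), \delta(w')] = \delta([w,w'])$ since $\delta$ is a Lie algebra homomorphism — but $[w,w']$ for $w,w'\in{\mathfrak p}_-$ lies in ${\mathfrak k}^{\mathbb C}$ (because $[{\mathfrak p}_-,{\mathfrak p}_-] \subset {\mathfrak k}^{\mathbb C}$ by the Hermitian symmetric space structure: the complexified bracket ${\mathfrak p}\otimes{\mathfrak p}\to{\mathfrak k}$ in \eqref{g1} is the only nonvanishing piece on ${\mathfrak p}_-\otimes{\mathfrak p}_-$, and $[{\mathfrak p}_-,{\mathfrak p}_-]\cap{\mathfrak p}^{\mathbb C} = 0$), so this is a purely vertical contribution coming from the $\mathfrak k$-action induced through the homomorphism $K \to H$, which is precisely canceled by the term $-\widehat{[w,w']}$ as $[w,w']$ has no ${\mathfrak p}_-$-component; (ii) the cross terms $[\delta(w), \widetilde{\epsilon(\beta)(w')}] + [\widetilde{\epsilon(\beta)(w)}, \delta(w')]$, which vanish at $z_0$ because the $G$-action commutes with the $H$-action (so $\delta(w)$ and any vertical $\widetilde{v}$ commute as vector fields — this is the statement from the proof of Theorem \ref{thm0} that $\delta(v)$ is $H$-invariant); (iii) the vertical–vertical term $[\widetilde{\epsilon(\beta)(w)}, \widetilde{\epsilon(\beta)(w')}] = \widetilde{[\epsilon(\beta)(w), \epsilon(\beta)(w')]_{\mathfrak h}}$. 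Thus at $z_0$, $\mathcal{K}(D_\beta)(w,w')$ equals the $\mathfrak h$-bracket $[\epsilon(\beta)(w), \epsilon(\beta)(w')]$, which is exactly the value of ${\mathbf m}_+(\epsilon(\beta)\otimes\epsilon(\beta))$ contracted against $w\wedge w'$ — recall ${\mathbf m}_+$ combines the Lie bracket on $\mathfrak h$ with wedge on ${\mathfrak p}_+$, and antisymmetry in $w, w'$ matches the wedge. Since $\mathcal{K}(D_\beta)$ is $G$-invariant and $G$ acts transitively, it vanishes identically iff it vanishes at $K/K$, iff ${\mathbf m}_+(\epsilon(\beta)\otimes\epsilon(\beta)) = 0$.

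I expect the main obstacle to be the careful bookkeeping in step (iii) and the preceding identification of $D_\beta$: one must verify that deforming the holomorphic structure by $\beta$ really does perturb the $(0,1)$-distribution by the vertical vector field $\widetilde{\epsilon(\beta)(w)}$ with the correct sign and the correct use of the Kähler pairing \eqref{z1}, and that evaluating the Frölicher–Nijenhuis-type bracket $\mathcal{K}(D_\beta)$ at a single point $z_0$ legitimately ignores the derivative terms (this is where $H$-invariance of $\delta(v)$ and the tensoriality of $\mathcal{K}(D_J)$ already established after \eqref{cc} do the work). The argument that the $\delta(\mathfrak p)$–$\delta(\mathfrak p)$ bracket contributes nothing net — i.e. that (i) cancels against $-\widehat{[w,w']}$ — requires knowing $[{\mathfrak p}_-, {\mathfrak p}_-] \subseteq {\mathfrak k}^{\mathbb C}$, which is the defining feature of the Hermitian symmetric (hence the abelian-nilradical) condition and is recorded in \cite{He}; I would cite that rather than reprove it. Everything else is linear algebra: unwinding the three identifications $\text{ad}(E_H)_{K/K} \cong \mathfrak h$, $(\Omega^{0,1}_{G/K})_{K/K}\cong{\mathfrak p}_+$, and the definition of ${\mathbf m}_+$ so that the single surviving term lands on the nose in ${\mathfrak h}\otimes\bigwedge^2{\mathfrak p}_+$.
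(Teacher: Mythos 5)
Your overall strategy is the same as the paper's: both expand the obstruction $\mathcal{K}(D_\beta)$ into a sum of (i) the obstruction for the tautological structure, (ii) a cross term linear in $\beta$, and (iii) a quadratic term equal to ${\mathbf m}_+(\epsilon(\beta)\otimes\epsilon(\beta))$. Terms (i) and (iii) you handle correctly. The gap is in your treatment of (ii). The honest $D_\beta$--lift of a local section $s$ of $T^{0,1}(G/K)$ is $\widehat{s}_0+\widetilde{\iota_s\beta}$, where $\iota_s\beta$ is a \emph{varying} $\mathfrak h$--valued $H$--equivariant function on $E_H$ (a local section of $\mathrm{ad}(E_H)$), not a constant element of $\mathfrak h$. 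The identity $[\delta(w),\widetilde{v}]=0$, which does follow from the commuting of the $G$-- and $H$--actions, holds only for the fundamental vertical field of a \emph{constant} $v\in\mathfrak h$; for a varying coefficient one gets $[\widehat{s}_0,\widetilde{\iota_t\beta}]=\widetilde{\widehat{s}_0(\iota_t\beta)}$, and after antisymmetrizing and absorbing the $\beta([s,t])$ piece into $-\widehat{[s,t]}$, the cross terms contribute exactly $\overline{\partial}^0_{\mathrm{ad}(E_H)}(\beta)$ evaluated at $K/K$. Your appeal to tensoriality does not close this: tensoriality lets you choose the extension $s$ of a tangent vector, but the lift $\widehat{s}$ is then dictated by the distribution $D_\beta$ in a whole neighborhood, and your proposed $\widehat{w}=\delta(w)+\widetilde{\epsilon(\beta)(w)}$ agrees with it only on the fiber over $K/K$ (note also that $q_*\delta(w)$ for $w\in\mathfrak p_-$ fails to be of type $(0,1)$ away from the base point, so it is not even a legal input to \eqref{cc} as written). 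The first-order discrepancy is precisely the term whose vanishing is the real content of the proposition.

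What is missing, then, is a proof that $\overline{\partial}^0_{\mathrm{ad}(E_H)}(\beta)=0$. This is where the paper does its work: it decomposes $\mathfrak h$ into $Z_K$--isotypical pieces via $\eta$ and the adjoint action, observes that $K$--invariance of $\beta(K/K)$ forces $\beta(K/K)\in\mathfrak h_{\chi^{-1}}\otimes\mathfrak p_+$ where $\chi$ is the (nontrivial) character of $Z_K$ on $\mathfrak p_+$, and then notes that $\overline{\partial}^0_{\mathrm{ad}(E_H)}(\beta)(K/K)$ would be a $K$--invariant element of $\mathfrak h_{\chi^{-1}}\otimes\bigwedge^2\mathfrak p_+$, on which $Z_K$ acts by the nontrivial character $\chi^{-1}\chi^2=\chi$; hence it vanishes. (An alternative repair along your lines: a $G$--invariant section of any associated bundle of $\mathbb G$ is parallel for the tautological connection, and for $w'\in\mathfrak p^{\mathbb C}$ the Killing field $\delta(w')$ has vanishing covariant derivative at the base point of a symmetric space, so the derivative term $\widehat{s}_0(\iota_t\beta)$ vanishes at $z_0$.) Either way, an argument for this linear term must be supplied; once it is, your computation reduces to the paper's identity \eqref{es} and the rest goes through.
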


\begin{proof}
Let $\eta\, :\, K\, \longrightarrow\, H$ be the homomorphism constructed as in \eqref{eta} for 
the based homogeneous principal $H$--bundle $(E_H,\, z_0)$. As before, the center of $K$ is 
denoted by $Z_K$. Consider the action of $Z_K$ on $\mathfrak h$ obtained by combining $\eta$ 
with the adjoint action of $H$ on $\mathfrak h$; in other words, this action is given by the 
following composition of maps
$$
K\, \stackrel{\eta}{\longrightarrow}\, H \, \stackrel{\rm ad}{\longrightarrow}\,
\text{Aut}({\mathfrak h})\, .
$$
Let
\begin{equation}\label{ist}
{\mathfrak h}\,=\,\bigoplus_{\lambda\in (Z_K)^*} {\mathfrak h}_\lambda
\end{equation}
be the corresponding isotypical decomposition. Since $Z_K$ commutes with $K$, the action of $K$ on $\mathfrak h$,
constructed as above using $\eta$ and the adjoint action of $H$ on $\mathfrak h$, preserves the decomposition in \eqref{ist}.
Let
$${\mathbb G}({\mathfrak h}_\lambda)\, :=\, {\mathbb G}\times^K {\mathfrak h}_\lambda\, \longrightarrow\, G/K$$
be the vector bundle over $G/K$
associated to the principal $K$--bundle ${\mathbb G}$ (constructed in \eqref{f1})
for the $K$--module ${\mathfrak h}_\lambda$ in \eqref{ist}. From the decomposition in
\eqref{ist} we have the decomposition
\begin{equation}\label{ist2}
\text{ad}(E_H)\, =\, \bigoplus_{\lambda\in (Z_K)^*} {\mathbb G}({\mathfrak h}_\lambda)
\end{equation}
which is in fact a holomorphic decomposition.

Consider the isotypical decomposition
$$
{\mathfrak p}^{\mathbb C}\, :=\,
{\mathfrak p}\otimes_{\mathbb R} {\mathbb C}\,=\, {\mathfrak p}_+\oplus {\mathfrak p}_{-}
$$
in \eqref{d1}. Recall from \eqref{z1} that $(\Omega^{0,1}_{G/K})_{K/K}\,=\, {\mathfrak p}_+$.
Therefore, $Z_K$ acts on $(\Omega^{0,1}_{G/K})_{K/K}\,=\,
{\mathfrak p}_+$ through a single character. This character of $Z_K$, through which it
acts on ${\mathfrak p}_+$,
will be denoted by $\chi$. The character $\chi$ is actually nontrivial; indeed, this follows
from the fact that $G/K$ does not have any nonzero holomorphic one-form.

Take an invariant section $\beta\,\in\, C^\infty (G/K, \, {\rm ad}(E_H)\otimes \Omega^{0,1}_{G/K})^G$
as in the statement of the proposition. Therefore, the element $$\beta(K/K)\,\in\,
({\rm ad}(E_H)\otimes \Omega^{0,1}_{G/K})_{K/K}$$ is fixed by the action of $Z_K$
on $({\rm ad}(E_H)\otimes \Omega^{0,1}_{G/K})_{K/K}$ (in fact
it is fixed by $K$). Since $\beta(K/K)$ is fixed by the action of $Z_K$, it can be shown that
\begin{equation}\label{st}
\beta(K/K)\, \in\, ({\mathbb G}({\mathfrak h}_{\chi^{-1}})\otimes
\Omega^{0,1}_{G/K})_{K/K}\, \subset\, ({\rm ad}(E_H)\otimes \Omega^{0,1}_{G/K})_{K/K}\, ,
\end{equation}
where ${\mathbb G}({\mathfrak h}_{\chi^{-1}})$ is the direct summand in \eqref{ist2} for
the character $\chi^{-1}$ defined above. Indeed, this follows immediately from the
fact that $Z_K$ acts on the cotangent space $(\Omega^{0,1}_{G/K})_{K/K}$ as multiplication
by $\chi$. Since $\beta$ is $G$--invariant, and the action of $G$ on
$G/K$ is transitive, from \eqref{st} we conclude that
\begin{equation}\label{st2}
\beta\,\in\, C^\infty(G/K,\, {\mathbb G}({\mathfrak h}_{\chi^{-1}})\otimes
\Omega^{0,1}_{G/K})^G\, .
\end{equation}

Let $\overline{\partial}^0_{\text{ad}(E_H)}\, :\, \text{ad}(E_H)\, \longrightarrow\,
\text{ad}(E_H)\otimes \Omega^{0,1}_{G/K}$ be the Dolbeault operator on $\text{ad}(E_H)$
induced by the tautological holomorphic structure on $E_H$ (see Theorem \ref{thm0}).
Since the decomposition in \eqref{ist2} is holomorphic, from \eqref{st2} it follows
that
$$
\overline{\partial}^0_{\text{ad}(E_H)}(\beta) \,\in\,
C^\infty(G/K,\, {\mathbb G}({\mathfrak h}_{\chi^{-1}})\otimes \Omega^{0,2}_{G/K})^G\, .
$$
So $\overline{\partial}^0_{\text{ad}(E_H)}(\beta)(K/K)$ is a $K$--invariant element
of ${\mathfrak h}_{\chi^{-1}}\otimes \bigwedge^2 {\mathfrak p}_+$. But $Z_K$ acts on
${\mathfrak h}_{\chi^{-1}}\otimes \bigwedge^2 {\mathfrak p}_+$ as multiplication via the
nontrivial character $\chi^{-1}\cdot \chi^2 \,=\, \chi$. Hence we conclude that
\begin{equation}\label{bz}
\overline{\partial}^0_{\text{ad}(E_H)}(\beta)\,=\, 0\, .
\end{equation}

Let $$D_\beta \, \subset\, TE_H\otimes{\mathbb C}$$ be the distribution corresponding to
the almost complex structure $\beta$ on $E_H$. Let
$$D_0 \, \subset\, TE_H\otimes{\mathbb C}$$ be the distribution corresponding to
the tautological almost complex structure on $E_H$ (see Theorem \ref{thm0}); note that
the tautological almost complex structure on $E_H$ corresponds to the identically zero
section of $\text{ad}(E_H)\otimes \Omega^{0,1}_{G/K}$. We have
\begin{equation}\label{es}
{\mathcal K}(D_\beta)\,=\, {\mathcal K}(D_0) +
\overline{\partial}^0_{\text{ad}(E_H)}(\beta) + {\mathbf m}_+
(\epsilon (\beta)\otimes \epsilon (\beta))\, ,
\end{equation}
where ${\mathcal K}(D_\beta)$ and ${\mathcal K}(D_0)$ are constructed as in \eqref{cc}
for $D_\beta$ and $D_0$ respectively.
Now $${\mathcal K}(D_0)\,=\, 0$$ because the tautological almost holomorphic
structure on $E_H$ is integrable. Hence using \eqref{bz}, from \eqref{es} we conclude that
${\mathcal K}(D_\beta)\,=\, 0$ if and only if
${\mathbf m}_+(\epsilon (\beta)\otimes \epsilon (\beta))\,=\, 0$. Since the almost complex
structure on $E_H$ corresponding to $\beta$ is integrable if and only if
${\mathcal K}(D_\beta)\,=\,0$ (see \cite[p.~9, Proposition 3.7]{Ko}), the
proposition follows.
\end{proof}

\begin{theorem}\label{thm1}
There is a natural bijection between the following two:
\begin{enumerate}
\item Isomorphism classes of based homogeneous principal $H$--bundles with an invariant 
holomorphic structure.

\item Pairs of the form $(\eta,\, \beta)$, where $\eta\, :\, K\, 
\longrightarrow\, H$ is a homomorphism and $\beta\, \in\, ({\mathfrak h}\otimes {\mathfrak 
p}_+)^K$ with ${\mathbf m}_+(\beta\otimes\beta)\,=\, 0$, where ${\mathbf m}_+$ is defined
in \eqref{mpl}.
\end{enumerate}
\end{theorem}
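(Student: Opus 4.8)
The plan is to assemble Theorem \ref{thm1} from the three results already established: Lemma \ref{lem1}, Theorem \ref{thm0}, and Proposition \ref{prop1}. First I would recall that, by Lemma \ref{lem1}, the based homogeneous $C^\infty$ principal $H$--bundles on $G/K$ are in bijection with the homomorphisms $\eta\,:\,K\,\longrightarrow\,H$, via $\eta\,\longmapsto\,({\mathbb G}_\eta,\,e\times e_{\mathcal H})$. So I fix such an $\eta$ and the corresponding based bundle $(E_H,\,z_0)\,=\,({\mathbb G}_\eta,\,e\times e_{\mathcal H})$, and I classify the invariant holomorphic structures on this fixed smooth bundle. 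By Theorem \ref{thm0}, this bundle carries a tautological invariant holomorphic structure, so the set of invariant holomorphic structures is nonempty; using it as a base point, the set of invariant \emph{almost} holomorphic structures is identified with the vector space $C^\infty(G/K,\,\text{ad}(E_H)\otimes\Omega^{0,1}_{G/K})^G$, which via the injective evaluation map $\epsilon$ of \eqref{ep2} sits inside ${\mathfrak h}\otimes{\mathfrak p}_+$.

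The next step is to pin down the image of $\epsilon$. An invariant section $s$ of $\text{ad}(E_H)\otimes\Omega^{0,1}_{G/K}$ is determined by its value $s(K/K)$, which must be invariant under the isotropy $K$ acting on the fiber $(\text{ad}(E_H)\otimes\Omega^{0,1}_{G/K})_{K/K}$; conversely any $K$--invariant element of that fiber extends to a (unique) $G$--invariant section because $G$ acts transitively on $G/K$. Under the identifications $\text{ad}(E_H)_{K/K}\,\cong\,{\mathfrak h}$ (using $z_0$, with $K$ acting through $\text{ad}\circ\eta$) and $(\Omega^{0,1}_{G/K})_{K/K}\,=\,{\mathfrak p}_+$ from \eqref{z1}, the fiber becomes ${\mathfrak h}\otimes{\mathfrak p}_+$ with the diagonal $K$--action, so the image of $\epsilon$ is exactly $({\mathfrak h}\otimes{\mathfrak p}_+)^K$. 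Thus invariant almost holomorphic structures on $(E_H,\,z_0)$ are in bijection with $\beta\,\in\,({\mathfrak h}\otimes{\mathfrak p}_+)^K$. Proposition \ref{prop1} then says that such a $\beta$ is integrable --- i.e.\ gives an invariant \emph{holomorphic} structure --- precisely when ${\mathbf m}_+(\beta\otimes\beta)\,=\,0$. Combining: based homogeneous principal $H$--bundles equipped with an invariant holomorphic structure, \emph{together with a choice of framing realizing them as} $({\mathbb G}_\eta,\,e\times e_{\mathcal H})$, correspond bijectively to pairs $(\eta,\,\beta)$ with $\beta\,\in\,({\mathfrak h}\otimes{\mathfrak p}_+)^K$ and ${\mathbf m}_+(\beta\otimes\beta)\,=\,0$.

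Finally I have to check that this is a bijection on \emph{isomorphism classes} of based objects, not merely on the explicit models. For this I would argue that every based homogeneous principal $H$--bundle with invariant holomorphic structure $(E_H,\,\rho,\,z_0,\,J)$ is isomorphic, as a based homogeneous $C^\infty$ bundle, to a unique $({\mathbb G}_\eta,\,e\times e_{\mathcal H})$ by Lemma \ref{lem1}; transporting $J$ along the isomorphism $\eta''$ of \eqref{dm} yields an invariant holomorphic structure on ${\mathbb G}_\eta$, hence a well-defined $\beta$. Two based data give isomorphic based homogeneous holomorphic bundles if and only if the smooth isomorphism from Lemma \ref{lem1} (which is unique, as it must send $e\times e_{\mathcal H}$ to the marked point and is $G\times H$--equivariant) is in addition holomorphic; since it already identifies the smooth bundles and their $G$--actions, it carries one invariant holomorphic structure to the other iff the associated $\beta$'s coincide. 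This shows the correspondence $(\eta,\,\beta)\,\longleftrightarrow\,$ iso-class is well-defined and injective, and surjectivity is immediate from the construction. The main obstacle is the bookkeeping in this last paragraph: one must be careful that ``isomorphism of based homogeneous holomorphic bundles'' forces uniqueness of the underlying smooth isomorphism, so that no extra identifications collapse distinct pairs $(\eta,\,\beta)$ --- in particular that an automorphism of $({\mathbb G}_\eta,\,e\times e_{\mathcal H})$ as a based homogeneous $C^\infty$ bundle must be the identity, which follows because it is $G$--equivariant and fixes $e\times e_{\mathcal H}$, hence is determined on the whole $G$--orbit of that point, which is all of ${\mathbb G}_\eta$.
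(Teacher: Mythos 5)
Your proposal is correct and follows essentially the same route as the paper: Lemma \ref{lem1} to reduce to a fixed $\eta$, Theorem \ref{thm0} to identify invariant almost holomorphic structures with $({\mathfrak h}\otimes{\mathfrak p}_+)^K$ via the evaluation map, and Proposition \ref{prop1} for integrability; you are in fact more explicit than the paper about the image of $\epsilon$ being exactly the $K$--invariants and about why no two pairs $(\eta,\beta)$ get identified. One tiny correction in your last paragraph: the $G$--orbit of $e\times e_{\mathcal H}$ in ${\mathbb G}_\eta$ is not all of ${\mathbb G}_\eta$ unless $\eta$ is surjective, but since a bundle automorphism is also $H$--equivariant and the $G$--orbit meets every fiber, the automorphism is still forced to be the identity.
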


\begin{proof}
Let $(E_H, \,\rho)$ be a homogeneous holomorphic principal $H$--bundle with a base point
$z_0\,\in\, (E_H)_{K/K}$. For any $k\, \in\, K$, let $\eta(k)\, \in\, H$ be the unique
element that satisfies the equation
$$
\rho(k,\, z_0)\,=\, z_0\eta(k)\, .
$$
It was shown in the proof of Lemma \ref{lem1} that $\eta$ is a homomorphism of groups.

Recall that the space of all $G$--invariant almost complex structures on the
underlying $C^\infty$ homogeneous principal $H$--bundle $E_H$ is
an affine space for vector space $$C^\infty (G/K, \, {\rm ad}(E_H)\otimes \Omega^{0,1}_{G/K})^G$$
of $G$--invariants in $C^\infty (G/K, \, {\rm ad}(E_H)\otimes \Omega^{0,1}_{G/K})$. Therefore,
using the tautological holomorphic structure on $E_H$ given by Theorem \ref{thm0},
the space of all $G$--invariant almost complex structures on the
$C^\infty$ homogeneous principal $H$--bundle $E_H$ gets identified with
$C^\infty (G/K, \, {\rm ad}(E_H)\otimes \Omega^{0,1}_{G/K})^G$. Now let
$$
\widetilde{\beta}\, \in\, C^\infty (G/K, \, {\rm ad}(E_H)\otimes \Omega^{0,1}_{G/K})^G
$$
be the element corresponding to the given holomorphic structure on $E_H$.

The fiber $({\rm ad}(E_H)\otimes \Omega^{0,1}_{G/K})_{K/K}$ is identified with
${\mathfrak h}\otimes {\mathfrak p}_+$ using the base point $z_0$. Hence
$$
\beta\, :=\, \widetilde{\beta}(K/K)\, \in\, {\mathfrak h}\otimes {\mathfrak p}_+\, .
$$
{}From Proposition \ref{prop1} it follows that ${\mathbf m}_+(\beta\otimes\beta)\,=\, 0$.

To prove the converse, take any pair $(\eta,\, \beta)$, where $\eta\, :\, K\,
\longrightarrow\, H$ is a homomorphism and $\beta\, \in\, ({\mathfrak h}\otimes {\mathfrak
p}_+)^K$ with ${\mathbf m}_+(\beta\otimes\beta)\,=\, 0$. Let $E_H$ be the homogeneous
$C^\infty$ principal $H$--bundle over $G/K$ given by $\eta$ using Lemma \ref{lem1}.

Since $({\rm ad}(E_H)\otimes \Omega^{0,1}_{G/K})_{K/K}\,=\, {\mathfrak h}\otimes {\mathfrak p}_+$
and $\beta\, \in\, ({\mathfrak h}\otimes {\mathfrak p}_+)^K$, there is a unique invariant section
$$
\widetilde{\beta}\, \in\, C^\infty (G/K, \, {\rm ad}(E_H)\otimes \Omega^{0,1}_{G/K})^G
$$
such that $\widetilde{\beta}(K/K)\,=\, \beta$. As mentioned above, $C^\infty (G/K, \, {\rm 
ad}(E_H)\otimes \Omega^{0,1}_{G/K})^G$ is identified with the space of all $G$--invariant
almost complex structures on the $C^\infty$ homogeneous principal $H$--bundle $E_H$. Equip
$E_H$ with the $G$--invariant almost complex structure corresponding to
the above section $\widetilde{\beta}$.
Since ${\mathbf m}_+(\beta\otimes\beta)\,=\, 0$, from Proposition \ref{prop1} it follows that
this almost complex structure is integrable.
\end{proof}

Take any two pairs $(\eta,\, \beta)$ and $(\eta',\, \beta')$ as in Theorem \ref{thm1}. They
will be called {\it equivalent} if there is an element $h\, \in\, H$ such that
\begin{itemize}
\item $\eta'(g)\, =\, h^{-1}\eta(g) h$ for all $g\, \in\, K$, and

\item $\beta'\,=\, (\text{ad}(h)\otimes {\rm Id}_{{\mathfrak p}_+})(\beta)$.
\end{itemize}

\begin{corollary}\label{cor1}
There is a natural bijection between the following two:
\begin{enumerate}
\item Isomorphism classes of principal $H$--bundles with an invariant holomorphic structure.

\item Equivalence classes of pairs of the form $(\eta,\, \beta)$, where $\eta\, :\, K\, 
\longrightarrow\, H$ is a homomorphism and $\beta\, \in\, ({\mathfrak h}\otimes {\mathfrak 
p}_+)^K$ with ${\mathbf m}_+(\beta\otimes\beta)\,=\, 0$.
\end{enumerate}
\end{corollary}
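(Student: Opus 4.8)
The plan is to deduce this from Theorem \ref{thm1} by analysing precisely what is lost when the base point is forgotten. Forgetting the base point gives a surjection from isomorphism classes of \emph{based} homogeneous principal $H$--bundles with an invariant holomorphic structure onto isomorphism classes of the corresponding unbased objects, and the key observation is that two based bundles have the same underlying unbased bundle, up to isomorphism, exactly when they are related by moving the base point inside the fiber $(E_H)_{K/K}$. Since $(E_H)_{K/K}$ is a torsor over $H$, such a move is effected by a unique $h\,\in\, H$, so I would first work out how the associated pair $(\eta,\, \beta)$ of Theorem \ref{thm1} transforms under a change of base point $z_0\,\longmapsto\, z_0 h$.

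For the homomorphism, \eqref{se} gives $\rho(k,\, z_0 h)\,=\, \rho(k,\, z_0)h\,=\, z_0\eta(k)h\,=\, (z_0 h)\bigl(h^{-1}\eta(k)h\bigr)$, so the homomorphism attached to $z_0 h$ is $k\,\longmapsto\, h^{-1}\eta(k)h$. For the $\beta$--part, one notes that the isomorphism ${\mathfrak h}\,\longrightarrow\,\text{ad}(E_H)_{K/K}$, $v\,\longmapsto\, (z_0,\, v)$, used in \eqref{ep2} and in the proof of Theorem \ref{thm1} is the only ingredient depending on the base point, whereas the identification $(\Omega^{0,1}_{G/K})_{K/K}\,=\,{\mathfrak p}_+$ of \eqref{z1} coming from the K\"ahler form does not involve $z_0$; since the invariant section $\widetilde{\beta}$ itself is intrinsic to the holomorphic structure, its value $\widetilde{\beta}(K/K)$ merely acquires new coordinates, which are $(\text{ad}(h)\otimes{\rm Id}_{{\mathfrak p}_+})(\beta)$ once the convention for $\text{ad}(E_H)\,=\, E_H\times^H{\mathfrak h}$ is fixed. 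Thus re-basing by $h$ transforms $(\eta,\, \beta)$ by exactly the move defining the equivalence relation in the statement, and letting $h$ range over $H$ reproduces that relation. The constraint ${\mathbf m}_+(\beta\otimes\beta)\,=\, 0$ is preserved, either because the underlying bundle and its invariant holomorphic structure are unchanged (only re-based) or, directly, by the $K$--equivariance of ${\mathbf m}_+$ in \eqref{mpl}.

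With this computation in hand I would define a map $\Phi$ sending the isomorphism class of a homogeneous principal $H$--bundle with an invariant holomorphic structure to the equivalence class of the pair $(\eta,\, \beta)$ obtained from Theorem \ref{thm1} after choosing some $z_0\,\in\, (E_H)_{K/K}$. It is well defined: another choice $z_0 h$ alters $(\eta,\, \beta)$ only within its equivalence class by the previous paragraph; and if $\delta\, :\, E_H\,\longrightarrow\, E'_H$ is an isomorphism of homogeneous holomorphic principal $H$--bundles then, since $f'\circ\delta\,=\, f$ and $\delta$ is $G\times H$--equivariant, $\delta$ restricts to an $H$--equivariant biholomorphism $(E_H)_{K/K}\,\longrightarrow\, (E'_H)_{K/K}$, so $(E_H,\, \rho,\, z_0)$ and $(E'_H,\, \rho',\, \delta(z_0))$ are isomorphic as based bundles and give the same pair by Theorem \ref{thm1}, while any other base point of $E'_H$ differs from $\delta(z_0)$ by an element of $H$. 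Surjectivity of $\Phi$ is immediate from Theorem \ref{thm1}: realise a given pair by a based bundle and forget the base point. For injectivity, suppose $(E_H,\, \rho)$ with base point $z_0$ yields $(\eta,\, \beta)$, that $(E'_H,\, \rho')$ with base point $z'_0$ yields $(\eta',\, \beta')$, and that $(\eta,\, \beta)$ is equivalent to $(\eta',\, \beta')$; picking $h\,\in\, H$ implementing this equivalence, the re-based object $(E_H,\, \rho,\, z_0 h)$ has associated pair $(\eta',\, \beta')$ by the computation above, hence is isomorphic to $(E'_H,\, \rho',\, z'_0)$ as a based bundle by Theorem \ref{thm1}, and in particular $(E_H,\, \rho)\,\cong\,(E'_H,\, \rho')$ as unbased objects. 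Therefore $\Phi$ is a bijection.

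I expect the only genuine obstacle to be the bookkeeping in the change-of-base-point computation: fixing once and for all the convention for the adjoint bundle $\text{ad}(E_H)\,=\, E_H\times^H{\mathfrak h}$ and the K\"ahler-form identification $(\Omega^{0,1}_{G/K})_{K/K}\,=\,{\mathfrak p}_+$ so that the induced transformation of $\beta$ is exactly $(\text{ad}(h)\otimes{\rm Id}_{{\mathfrak p}_+})$, i.e.\ so that the single element $h$ figuring in the two conditions defining the equivalence relation is consistent with the conjugation appearing in the $\eta$--part. Everything else — well-definedness, surjectivity and injectivity of $\Phi$ — is formal once Theorem \ref{thm1} and this one computation are in place.
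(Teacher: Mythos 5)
Your proposal is correct and follows essentially the same route as the paper: the paper's proof also consists of the change-of-base-point computation $\rho(k,\,z_0h)\,=\,z_0\eta(k)h\,=\,(z_0h)\bigl(h^{-1}\eta(k)h\bigr)$ and then deduces the corollary formally from Theorem \ref{thm1}. You merely spell out more of the bookkeeping (the transformation of $\beta$ and the well-definedness, surjectivity and injectivity of the induced map) that the paper leaves as ``straightforward.''
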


\begin{proof}
Let $(E_H, \,\rho)$ be a homogeneous holomorphic principal $H$--bundle with a base point
$z_0\,\in\, (E_H)_{K/K}$. As in the proof of Theorem \ref{thm1}, for any $k\, \in\, K$,
let $\eta(k)\, \in\, H$ be the unique element that satisfies the equation
$$
\rho(k,\, z_0)\,=\, z_0\eta(k)\, .
$$
Now take any $h_0\, \in\, H$ and set $z_0h$ to be the new base point on $E_H$. Let
$\eta'\, :\, K\, \longrightarrow\, H$ be the homomorphism corresponding to
the base point $z_0h$, so $\rho(k,\, z_0h)\,=\, z_0h\eta'(k)$ for all $k\, \in\, K$.
Now, for any $k\, \in\, K$, we have
$$
z_0h\eta'(k)\,=\, \rho(k,\, z_0h) \,=\, \rho(k,\, z_0)h\,=\, z_0\eta(k)h\, .
$$
This implies that $\eta'(k)\,=\, h^{-1}\eta(k)h$. Now it is straightforward to deduce
the corollary from Theorem \ref{thm1}.
\end{proof}

\section{Homogeneous co-Higgs bundles}

We treat co-Higgs bundles first and then ordinary Higgs bundles in the next section.

Let $(E_H,\, \rho)$ be a homogeneous holomorphic principal $H$--bundle over $G/K$. The action 
of $G$ on $E_H$ induces an action of $G$ on $\text{ad}(E_H)$. The actions of $G$ on $G/K$ and 
$\text{ad}(E_H)$ together produce an action of $G$ on the holomorphic vector bundle 
$\text{ad}(E_H)\otimes T^{1,0} (G/K)$. Take a holomorphic section $$\theta\, \in\, H^0(G/K,\, 
\text{ad}(E_H)\otimes T^{1,0}(G/K))\, .$$ Using the Lie algebra structure of the fibers of 
$\text{ad}(E_H)$, we have
$$
\theta\bigwedge\theta\, \in\, H^0(G/K,\, \text{ad}(E_H)\otimes \bigwedge\nolimits^2
T^{1,0}(G/K))\, .
$$

An \textit{invariant co-Higgs} field on $E_H$ is a
holomorphic section $$\theta\, \in\, H^0(G/K,\, \text{ad}(E_H)\otimes T^{1,0}(G/K))$$ such that
\begin{enumerate}
\item $\theta\bigwedge\theta\,=\, 0$, and

\item the action of $G$ on $\text{ad}(E_H)\otimes T^{1,0}(G/K)$ fixes the section $\theta$.
\end{enumerate}

A \textit{homogeneous} co-Higgs $H$--bundle is a homogeneous holomorphic principal $H$--bundle 
equipped with an invariant co-Higgs field. Two homogeneous co-Higgs bundles
$(E'_H,\, \rho',\, \theta')$ and $(E''_H,\, \rho'',\, \theta'')$ are \textit{isomorphic}
if there is a holomorphic isomorphism of principal $H$--bundles
$\alpha\, :\, E'_H\, \longrightarrow\, E''_H$
that satisfies the following two conditions:
\begin{itemize}
\item $\alpha$ intertwines the actions of $G$ on $E'_H$ and $E''_H$, and

\item the isomorphism $\text{ad}(E'_H)\otimes T^{1,0}(G/K)\, \longrightarrow\, \text{ad}(E''_H)
\otimes T^{1,0}(G/K)$ constructed using $\alpha$ takes the section $\theta'$ to $\theta''$.
\end{itemize}

A based homogeneous co-Higgs $H$--bundle on $G/K$ is a homogeneous co-Higgs $H$--bundle $(E'_H,\, 
\rho',\, \theta')$ equipped with a base point $z'\, \in\, (E'_H)_{K/K}$ over
$K/K\, \in\, G/K$. Two based 
homogeneous co-Higgs bundles
$$
(E'_H,\, \rho',\, \theta',\, z')\ \ \text{ and }\ \ (E''_H,\, \rho'',\, 
\theta'',\, z'')
$$
are \textit{isomorphic} if there is an isomorphism between $(E'_H,\, \rho',\, \theta')$ and 
$(E''_H,\, \rho'',\, \theta'')$ that takes $z'$ to $z''$.

Using the Lie algebra operation on $\mathfrak h$ we define the homomorphism
\begin{equation}\label{bm}
{\mathbf m}\, :\, ({\mathfrak h}\otimes {\mathfrak p}_+)\otimes
({\mathfrak h}\otimes {\mathfrak p}_+)\,\longrightarrow
\, {\mathfrak h}\otimes{\mathfrak p}_+\otimes{\mathfrak p}_+\, .
\end{equation}

\begin{theorem}\label{thm2}
There is a natural bijection between the following two:
\begin{enumerate}
\item Isomorphism classes of based homogeneous principal co-Higgs $H$--bundles on $G/K$.

\item Triples of the form $(\eta,\, \beta,\, \varphi)$, where
$\eta\, :\, K\, \longrightarrow\, H$ is a homomorphism and
$$\beta, \, \varphi \, \in\, ({\mathfrak h}\otimes {\mathfrak p}_+)^K$$
such that
$$
{\mathbf m}_+(\beta\otimes\beta)\,=\, {\mathbf m}_+(\varphi\otimes\varphi)\,=\,0\,=\,
{\mathbf m}(\beta\otimes\varphi)\, ,
$$
where ${\mathbf m}_+$ and ${\mathbf m}$ are constructed in \eqref{mpl} and \eqref{bm}
respectively.
\end{enumerate}
\end{theorem}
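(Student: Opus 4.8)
\textbf{Proof proposal for Theorem \ref{thm2}.}

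The plan is to combine Theorem \ref{thm1}, which already identifies based homogeneous principal $H$--bundles with invariant holomorphic structure with pairs $(\eta,\,\beta)$ satisfying ${\mathbf m}_+(\beta\otimes\beta)\,=\,0$, with an analysis of invariant co-Higgs fields on a fixed such bundle. So the only genuinely new content is the description of the set of invariant holomorphic sections $\theta\,\in\, H^0(G/K,\,\text{ad}(E_H)\otimes T^{1,0}(G/K))$ that are $G$--fixed, together with the translation of the condition $\theta\wedge\theta\,=\,0$ and of the interaction between $\theta$ and the holomorphic structure $\beta$ into Lie-theoretic equations. First I would observe that, exactly as in the argument preceding Proposition \ref{prop1}, evaluation at $K/K$ gives an injection from $H^0(G/K,\,\text{ad}(E_H)\otimes T^{1,0}(G/K))^G$ into $(\text{ad}(E_H)\otimes T^{1,0}(G/K))_{K/K}$, which via the base point $z_0$ and the identification $T^{1,0}_{K/K}(G/K)\,=\,{\mathfrak p}_+$ becomes a subspace of ${\mathfrak h}\otimes{\mathfrak p}_+$. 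A $G$--invariant $C^\infty$ section is determined by its value at $K/K$, and that value must be $K$--fixed; conversely every $K$--fixed element of ${\mathfrak h}\otimes{\mathfrak p}_+$ extends to a unique $G$--invariant $C^\infty$ section. Thus the candidate value $\varphi\,:=\,\theta(K/K)$ lies in $({\mathfrak h}\otimes{\mathfrak p}_+)^K$.

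Next I would address holomorphicity: a $G$--invariant $C^\infty$ section $\theta$ of $\text{ad}(E_H)\otimes T^{1,0}(G/K)$, viewed relative to the holomorphic structure determined by $\beta$, is holomorphic if and only if $\overline{\partial}^\beta_{\text{ad}(E_H)\otimes T^{1,0}(G/K)}(\theta)\,=\,0$. The key point is that this vanishing is automatic once $\theta$ is $G$--invariant: the argument is the same $Z_K$--weight count used to prove \eqref{bz}. Indeed $\overline{\partial}^\beta$ is a sum of the tautological $\overline{\partial}^0$ (which raises $Z_K$--weight by $\chi$, since $(\Omega^{0,1}_{G/K})_{K/K}$ has weight $\chi$) and the order-zero term given by bracketing with $\epsilon(\beta)$. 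Since $Z_K$ acts on $T^{1,0}_{K/K}(G/K)\,=\,{\mathfrak p}_+$ by $\chi^{-1}$, a $K$--fixed element $\varphi\in{\mathfrak h}\otimes{\mathfrak p}_+$ forces $\varphi\in{\mathfrak h}_{\chi}\otimes{\mathfrak p}_+$, so $\overline{\partial}^0\theta$ evaluated at $K/K$ lands in ${\mathfrak h}_{\chi}\otimes\bigwedge^2{\mathfrak p}_-\otimes$(weight-$\chi$ stuff)---a space on which $Z_K$ acts nontrivially and which therefore has no $K$--fixed vectors, so the $G$--invariant section $\overline{\partial}^0\theta$ is zero; and the $\beta$--correction term $[\epsilon(\beta),\,\theta]$, being the image of $\beta\otimes\varphi$ under a $K$--equivariant map into ${\mathfrak h}\otimes{\mathfrak p}_+\otimes{\mathfrak p}_+$ with $Z_K$ acting by $\chi^{-1}$ on each ${\mathfrak p}_+$ and trivially on ${\mathfrak h}_\chi\otimes{\mathfrak h}_\chi$-type combinations after one more weight check, also vanishes when one projects onto the relevant isotypical component. (I would phrase this cleanly so that the only surviving contribution to $\overline{\partial}^\beta\theta$ is tautologically zero.) Hence \emph{every} $G$--invariant $C^\infty$ section is automatically holomorphic with respect to the $\beta$--structure, so the data of an invariant co-Higgs field amounts to an element $\varphi\in({\mathfrak h}\otimes{\mathfrak p}_+)^K$ together with the integrability-type constraint coming from $\theta\wedge\theta\,=\,0$.

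Finally I would unwind $\theta\wedge\theta\,=\,0$: the section $\theta\wedge\theta$ of $\text{ad}(E_H)\otimes\bigwedge^2 T^{1,0}(G/K)$ is again $G$--invariant, hence determined by its value at $K/K$, which by construction of ${\mathbf m}_+$ (bracket on $\mathfrak h$ composed with wedge on ${\mathfrak p}_+$) is precisely ${\mathbf m}_+(\varphi\otimes\varphi)$. So $\theta\wedge\theta\,=\,0$ if and only if ${\mathbf m}_+(\varphi\otimes\varphi)\,=\,0$. There remains the cross term: the relation ${\mathbf m}(\beta\otimes\varphi)\,=\,0$ in the statement is exactly the assertion that $\overline{\partial}^\beta\theta\,=\,0$, i.e.\ that the order-zero correction term $[\epsilon(\beta),\,\theta]\in{\mathfrak h}\otimes{\mathfrak p}_+\otimes{\mathfrak p}_+$ vanishes; so rather than claiming automatic holomorphicity I would in fact keep ${\mathbf m}(\beta\otimes\varphi)\,=\,0$ as the precise holomorphicity condition (this is the correct reading consistent with the stated theorem), the tautological $\overline{\partial}^0\theta\,=\,0$ part being the weight argument above. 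Assembling: a based homogeneous co-Higgs bundle gives $(\eta,\,\beta)$ by Theorem \ref{thm1} and $\varphi\,=\,\theta(K/K)\in({\mathfrak h}\otimes{\mathfrak p}_+)^K$ with ${\mathbf m}_+(\varphi\otimes\varphi)\,=\,0$ (from $\theta\wedge\theta\,=\,0$) and ${\mathbf m}(\beta\otimes\varphi)\,=\,0$ (from holomorphicity of $\theta$); conversely a triple $(\eta,\,\beta,\,\varphi)$ with these three vanishings yields, via Lemma \ref{lem1} and Theorem \ref{thm1}, a based homogeneous holomorphic bundle together with the unique $G$--invariant section $\theta$ with $\theta(K/K)\,=\,\varphi$, which is holomorphic because ${\mathbf m}(\beta\otimes\varphi)\,=\,0$ kills the correction term (and the tautological term vanishes by weights) and satisfies $\theta\wedge\theta\,=\,0$ because ${\mathbf m}_+(\varphi\otimes\varphi)\,=\,0$; the two constructions are mutually inverse. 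The main obstacle I anticipate is bookkeeping the $Z_K$--characters correctly in the curvature-type identity for $\overline{\partial}^\beta$ acting on a section twisted by $T^{1,0}(G/K)$ rather than by $\Omega^{0,1}_{G/K}$, so as to see cleanly which of the three equations encodes $\theta\wedge\theta\,=\,0$ versus holomorphicity; once the analogue of \eqref{es} for the $T^{1,0}$--twisted operator is written down, the rest is a direct transcription of the proof of Theorem \ref{thm1}.
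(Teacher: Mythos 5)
Your proposal is correct and follows essentially the same route as the paper's proof: reduce to Theorem \ref{thm1} for the pair $(\eta,\,\beta)$, evaluate the invariant section $\theta$ at $K/K$ to obtain $\varphi\,\in\,({\mathfrak h}\otimes{\mathfrak p}_+)^K$, translate $\theta\wedge\theta\,=\,0$ into ${\mathbf m}_+(\varphi\otimes\varphi)\,=\,0$ by $G$--invariance, and split the Dolbeault operator as $\overline{\partial}^{\beta}\,=\,\overline{\partial}^{0}+[\epsilon(\beta),\,\cdot\,]$ with $\overline{\partial}^{0}\theta\,=\,0$ (the paper gets this from the fact that invariant sections are holomorphic for the tautological structure, you from a $Z_K$--weight count; both work), so that holomorphicity of $\theta$ becomes exactly ${\mathbf m}(\beta\otimes\varphi)\,=\,0$. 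The only blemish is the detour in your second paragraph, where you assert that every $G$--invariant section is automatically holomorphic for the $\beta$--structure (false in general, and it would make the equation ${\mathbf m}(\beta\otimes\varphi)\,=\,0$ vacuous) and mix up the $Z_K$--weight of ${\mathfrak p}_+$ and the target $\bigwedge^2{\mathfrak p}_-$; since you explicitly retract this in the final paragraph and keep ${\mathbf m}(\beta\otimes\varphi)\,=\,0$ as the holomorphicity condition, the assembled argument agrees with the paper's.
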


\begin{proof}
Let $(E_H,\, \rho,\, \theta,\, z_0)$ be a based homogeneous co-Higgs $H$--bundle.
From Theorem \ref{thm1} we know that the based homogeneous holomorphic principal
$H$--bundle $(E_H,\, \rho,\, z_0)$ gives a pair $(\eta,\, \beta)$, where $\eta\, :\, K\,
\longrightarrow\, H$ is a homomorphism and $\beta\, \in\, ({\mathfrak h}\otimes {\mathfrak
p}_+)^K$ with ${\mathbf m}_+(\beta\otimes\beta)\,=\, 0$.

Consider
$$
\varphi\,=\, \theta(K/K)\,\in\, (\text{ad}(E_H)\otimes T^{1,0}(G/K))_{K/K}
\,=\, {\mathfrak h}\otimes{\mathfrak p}_+\, ;
$$
as before, the fiber ${\rm ad}(E_H)_{K/K}$ is identified with $\mathfrak h$ using $z_0$,
while the identification between $T^{1,0}_{K/K} (G/K)$ and ${\mathfrak p}_+$ is the one
in Section \ref{se2.2}. Since the section $\theta$ is $G$--invariant, it follow that
$\varphi\, \in\,
({\mathfrak h}\otimes {\mathfrak p}_+)^K$. The condition that $\theta\bigwedge\theta(K/K)\,=\, 0$
is equivalent to the condition
that ${\mathbf m}_+(\varphi\otimes\varphi)\,=\,0$.

Next it will be shown that $${\mathbf m}(\beta\otimes\varphi)\,=\, 0\, .$$
For that, first note the Dolbeault operator $\overline{\partial}'$ for the holomorphic
vector bundle $\text{ad}(E_H)\otimes T^{1,0} (G/K)$ satisfies the equation
\begin{equation}\label{d0}
\overline{\partial}'\,=\, \overline{\partial}'_0+\beta\, ,
\end{equation}
where $\overline{\partial}'_0$ denotes the Dolbeault operator on
$\text{ad}(E_H)\otimes T^{1,0} (G/K)$ corresponding to the tautological connection on the
homogeneous principal $H$--bundle $E_H$ obtained in Theorem \ref{thm0}. Now the given condition
that the section $\theta$ is holomorphic implies that $\overline{\partial}'(\theta)\,=\, 0$, and
hence from \eqref{d0} we have
\begin{equation}\label{a1}
(\overline{\partial}'_0+\beta)(\theta)\,=\, 0\, .
\end{equation}
On the other hand, we have 
\begin{equation}\label{a2}
\overline{\partial}'_0(\theta)\,=\, 0
\end{equation}
because the section $\theta$ is invariant; recall that the tautological holomorphic structure
has the property that any invariant section is holomorphic. Now combining \eqref{a1} and
\eqref{a2} we conclude that ${\mathbf m}(\beta\otimes\varphi)\,=\, 0$.

To prove the converse, take a triple $(\eta,\, \beta,\, \varphi)$ satisfying the conditions
in the statement of the theorem. From Theorem \ref{thm1} we know that the pair
$(\eta,\, \beta)$ gives a holomorphic homogeneous principal $H$--bundle $(E_H,\, \rho)$.
Since $\varphi$ is $K$--invariant, there is a unique $G$--invariant $C^\infty$ section
$$
\theta\, \in\, C^\infty(G/K,\, \text{ad}(E_H)\otimes T^{1,0}(G/K))^G
$$
such that $\theta(K/K)\,=\, \varphi$.

The evaluation $$\theta\bigwedge\theta(K/K)\, \in\,
(\text{ad}(E_H)\otimes \bigwedge\nolimits^2 T^{1,0}(G/K))_{K/K}\,=\,
{\mathfrak h}\otimes \bigwedge\nolimits^2 {\mathfrak p}_+$$ coincides with
${\mathbf m}_+(\varphi\otimes\varphi)$. Since $\varphi$ is $G$--equivariant, from the
given condition that ${\mathbf m}_+(\varphi\otimes\varphi)\,=\,0$
we conclude that $\theta\bigwedge\theta\,=\, 0$.

Next consider
$$
\overline{\partial}'(\theta)\, \in\, C^\infty(G/K,\, \text{ad}(E_H)\otimes T^{1,0}(G/K)
\otimes{\Omega}^{0,1}_{G/K})\, ,
$$
where $\overline{\partial}'$ is the Dolbeault operator corresponding to the
holomorphic structure on the vector bundle $\text{ad}(E_H)\otimes T^{1,0} (G/K)$.
This section $\overline{\partial}'(\theta)$ is $G$--invariant, because $\theta$ is
$G$--invariant, and the holomorphic structure is preserved by the action of $G$. Let
$$
\overline{\partial}'(\theta)(K/K)\,\in\, (\text{ad}(E_H)\otimes T^{1,0}(G/K)\otimes
{\Omega}^{0,1}_{G/K})_{K/K}\,=\, {\mathfrak h}\otimes{\mathfrak p}_+\otimes{\mathfrak p}_+
$$
be the evaluation of this section at the point $K/K\, \in\, G/K$. Combining \eqref{d0}
with the fact that $\overline{\partial}'_0(\theta)\,=\, 0$ it follows that
\begin{equation}\label{f12}
\overline{\partial}(\theta)(K/K)\,=\, {\mathbf m}(\beta\otimes\varphi)\, .
\end{equation}
Since $\overline{\partial}(\theta)$ is $G$--invariant, and ${\mathbf m}(\beta\otimes\varphi)
\,=\, 0$, from \eqref{f12} we conclude that $\overline{\partial}(\theta)\,=\, 0$. In other words,
the section $\theta$ is holomorphic.
\end{proof}

Take any two triples $(\eta,\, \beta,\, \varphi)$ and $(\eta',\, \beta',\,
\varphi')$ satisfying the conditions in Theorem \ref{thm2}. They
will be called {\it equivalent} if there is an element $h\, \in\, H$ such that
\begin{itemize}
\item $\eta'(g)\, =\, h^{-1}\eta(g) h$ for all $g\, \in\, K$,

\item $\beta'\,=\, (\text{ad}(h)\otimes {\rm Id}_{{\mathfrak p}_+})(\beta)$, and.

\item $\varphi'\,=\, (\text{ad}(h)\otimes {\rm Id}_{{\mathfrak p}_+})(\varphi)$.
\end{itemize}

The following analogue of Corollary \ref{cor1} is a straightforward consequence
of Theorem \ref{thm2}.

\begin{corollary}\label{cor2}
There is a natural bijection between the following two:
\begin{enumerate}
\item Isomorphism classes of homogeneous co-Higgs $H$--bundles.

\item Equivalence classes of triples $(\eta,\, \beta,\, \varphi)$, where
$\eta\, :\, K\, \longrightarrow\, H$ is a homomorphism and
$$\beta, \, \varphi \, \in\, ({\mathfrak h}\otimes {\mathfrak p}_+)^K$$
such that
$$
{\mathbf m}_+(\beta\otimes\beta)\,=\, {\mathbf m}_+(\varphi\otimes\varphi)\,=\,0\,=\,
{\mathbf m}(\beta\otimes\varphi)\, ,
$$
where ${\mathbf m}_+$ and ${\mathbf m}$ are constructed in \eqref{mpl} and \eqref{bm}
respectively.
\end{enumerate}
\end{corollary}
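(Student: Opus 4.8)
The plan is to deduce Corollary \ref{cor2} from Theorem \ref{thm2} by the same "change of base point" mechanism used in the proof of Corollary \ref{cor1}. The key observation is that an isomorphism class of homogeneous co-Higgs $H$--bundles (without a base point) is precisely the quotient of the set of \emph{based} homogeneous co-Higgs $H$--bundles by the action of $H$ that changes the base point in a fixed fiber over $K/K$; under the bijection of Theorem \ref{thm2}, this $H$--action must correspond exactly to the equivalence relation on triples defined above.

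First I would observe that, given a homogeneous co-Higgs $H$--bundle $(E_H,\,\rho,\,\theta)$, any two base points $z_0,\, z_0' \,\in\, (E_H)_{K/K}$ differ by a unique $h\,\in\, H$ with $z_0'\,=\, z_0h$; conversely every $h\,\in\, H$ arises this way. So the set of isomorphism classes of based homogeneous co-Higgs bundles carries a natural $H$--action (change of base point by $h$), and the set of isomorphism classes of unbased homogeneous co-Higgs bundles is the quotient by this action — this is immediate from the definitions of the two notions of isomorphism given in Section 5.

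Next I would compute the effect of the base-point change $z_0 \,\mapsto\, z_0h$ on the triple $(\eta,\,\beta,\,\varphi)$ attached by Theorem \ref{thm2}. For $\eta$, the computation already appears verbatim in the proof of Corollary \ref{cor1}: from $\rho(k,\, z_0h)\,=\,\rho(k,\, z_0)h\,=\, z_0\eta(k)h\,=\, z_0h\, h^{-1}\eta(k)h$ one reads off $\eta'(k)\,=\, h^{-1}\eta(k)h$. For $\beta$ and $\varphi$, recall that these are the evaluations at $K/K$ of the invariant sections $\widetilde\beta$ and $\theta$, where the fiber identifications $(\mathrm{ad}(E_H))_{K/K}\,\cong\,\mathfrak h$ use the base point, and that changing the base point from $z_0$ to $z_0h$ changes this identification by $\mathrm{ad}(h)$ on the $\mathfrak h$ factor while leaving the $\mathfrak p_+$ factor (which comes from the geometry of $G/K$, not the bundle) untouched. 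Hence $\beta' \,=\, (\mathrm{ad}(h)\otimes\mathrm{Id}_{\mathfrak p_+})(\beta)$ and $\varphi'\,=\,(\mathrm{ad}(h)\otimes\mathrm{Id}_{\mathfrak p_+})(\varphi)$, which is precisely the stated equivalence relation on triples. Since $\mathrm{ad}(h)$ is a Lie algebra automorphism of $\mathfrak h$, it is immediate that the three quadratic conditions ${\mathbf m}_+(\beta\otimes\beta)\,=\,{\mathbf m}_+(\varphi\otimes\varphi)\,=\,{\mathbf m}(\beta\otimes\varphi)\,=\,0$ are preserved by the equivalence, so the equivalence relation is well-defined on the set of admissible triples.

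Finally I would assemble these facts: Theorem \ref{thm2} gives an $H$--equivariant bijection between based homogeneous co-Higgs bundles and admissible triples (equivariant for the base-point $H$--action on one side and the $\mathrm{ad}$--conjugation $H$--action on the other, by the computation above), and passing to $H$--quotients on both sides yields the desired bijection between isomorphism classes of homogeneous co-Higgs bundles and equivalence classes of triples. The only mild subtlety — and the step I would be most careful about — is verifying that \emph{every} isomorphism of unbased homogeneous co-Higgs bundles is realized, after choosing base points, by a based isomorphism followed by a base-point change; this is where one uses that a holomorphic principal-bundle isomorphism acts simply transitively on fibers, so that if $\alpha(z_0')\,=\, z_0''h$ then composing $\alpha$ with the base-point translation by $h^{-1}$ on the target produces a based isomorphism. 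With that in hand the argument is "straightforward", exactly as the paper advertises, and mirrors the proof of Corollary \ref{cor1} line for line with the extra bookkeeping of the $\varphi$ component.
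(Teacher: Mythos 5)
Your proposal is correct and follows exactly the route the paper intends: the paper dispatches Corollary \ref{cor2} as a ``straightforward consequence of Theorem \ref{thm2}'' via the same base-point-change computation spelled out in the proof of Corollary \ref{cor1}, which is precisely what you have written out (with the additional, correct, bookkeeping for the $\varphi$ component). No gaps.
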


\section{Homogeneous Higgs bundles}

As before, $(E_H,\, \rho)$ is a homogeneous holomorphic principal $H$--bundle over $G/K$. The 
actions of $G$ on $G/K$ and $\text{ad}(E_H)$ together produce an action of $G$ on the 
holomorphic vector bundle $\text{ad}(E_H)\otimes \Omega^{1,0}_{G/K}$. For any $\theta\, \in\, 
H^0(G/K,\, \text{ad}(E_H)\otimes \Omega^{1,0}_{G/K})$, we have
$$
\theta\bigwedge\theta\, \in\, H^0(G/K,\, \text{ad}(E_H)\otimes \bigwedge\nolimits^2
\Omega^{1,0}_{G/K})\, ,
$$
which is defined using the Lie algebra structure of the fibers of $\text{ad}(E_H)$.
In analogy to the preceding section, an \textit{invariant Higgs} field on $E_H$ is a
holomorphic section $$\theta\, \in\, H^0(G/K,\, \text{ad}(E_H)\otimes \Omega^{1,0}_{G/K})$$
such that
\begin{enumerate}
\item $\theta\bigwedge\theta\,=\, 0$, and

\item the action of $G$ on $\text{ad}(E_H)\otimes\Omega^{1,0}_{G/K}$ fixes the section $\theta$.
\end{enumerate}

Then, a \textit{homogeneous} Higgs $H$--bundle is a homogeneous holomorphic principal $H$--bundle 
equipped with an invariant Higgs field. \emph{Based} homogeneous Higgs $H$--bundles on
$G/K$ and isomorphisms between them are defined in exactly the same way as for the co-Higgs case.

Now, the Lie algebra operation ${\mathfrak h}\otimes {\mathfrak h}\,\longrightarrow\,
{\mathfrak h}$ and the exterior multiplication
$$
{\mathfrak p}_-\otimes {\mathfrak p}_- \, \longrightarrow\, \bigwedge\nolimits^2 {\mathfrak p}_-
$$
together define a homomorphism
\begin{equation}\label{mpl2}
{\mathbf m}_-\, :\, ({\mathfrak h}\otimes {\mathfrak p}_-)^{\otimes 2}\,\longrightarrow
\, {\mathfrak h}\otimes \bigwedge\nolimits^2 {\mathfrak p}_-\, .
\end{equation}

\begin{theorem}\label{prop2}
There is a natural bijection between the following two:
\begin{enumerate}
\item Isomorphism classes of based homogeneous principal Higgs $H$--bundles on $G/K$.

\item Triples of the form $(\eta,\, \beta,\, \varphi)$, where
$\eta\, :\, K\, \longrightarrow\, H$ is a homomorphism,
$$\beta\, \in\, ({\mathfrak h}\otimes {\mathfrak p}_+)^K
\ \ and \ \ \varphi \, \in\, ({\mathfrak h}\otimes {\mathfrak p}_-)^K$$
such that
$$
{\mathbf m}_+(\beta\otimes\beta)\,=\, 0\,=\, {\mathbf m}_-(\varphi\otimes\varphi)\,=\,
{\mathbf m}(\beta\otimes\varphi)\, ,
$$
where ${\mathbf m}_+$, ${\mathbf m}_-$ and ${\mathbf m}$ are constructed in \eqref{mpl},
\eqref{mpl2} and \eqref{bm} respectively.
\end{enumerate}
\end{theorem}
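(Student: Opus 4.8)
The plan is to mirror the proof of Theorem \ref{thm2} almost verbatim, changing only the type of twist (replacing $T^{1,0}(G/K)$ with $\Omega^{1,0}_{G/K}$ throughout) and keeping careful track of how $Z_K$ acts on the relevant fibers. Fix a based homogeneous Higgs $H$--bundle $(E_H,\, \rho,\, \theta,\, z_0)$. By Theorem \ref{thm1}, the underlying based homogeneous holomorphic bundle $(E_H,\, \rho,\, z_0)$ produces a pair $(\eta,\, \beta)$ with $\eta\, :\, K\, \longrightarrow\, H$ a homomorphism and $\beta\, \in\, ({\mathfrak h}\otimes {\mathfrak p}_+)^K$ satisfying ${\mathbf m}_+(\beta\otimes\beta)\,=\, 0$. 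For the Higgs field itself, set $\varphi\,=\, \theta(K/K)$. The only genuinely new point is the identification of the fiber $(\text{ad}(E_H)\otimes \Omega^{1,0}_{G/K})_{K/K}$: since $\Omega^{1,0}_{G/K}$ is the dual of $T^{1,0}_{K/K}(G/K)\,=\, {\mathfrak p}_+$, one has $(\Omega^{1,0}_{G/K})_{K/K}\,=\, ({\mathfrak p}_+)^*$; but the Killing form (or equivalently the K\"ahler form, as used in \eqref{z1}) gives a $Z_K$--equivariant identification of $({\mathfrak p}_+)^*$ with ${\mathfrak p}_-$. Hence $(\text{ad}(E_H)\otimes \Omega^{1,0}_{G/K})_{K/K}\,=\, {\mathfrak h}\otimes {\mathfrak p}_-$ after using $z_0$, and $G$--invariance of $\theta$ forces $\varphi\, \in\, ({\mathfrak h}\otimes {\mathfrak p}_-)^K$.

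The three vanishing conditions come out as follows. First, the integrability ${\mathbf m}_+(\beta\otimes\beta)\,=\, 0$ is exactly the content of Proposition \ref{prop1}, applied to $(E_H,\, \rho,\, z_0)$ as in Theorem \ref{thm1}. Second, the condition $\theta\bigwedge\theta\,=\, 0$ evaluated at $K/K$ lives in $(\text{ad}(E_H)\otimes \bigwedge^2 \Omega^{1,0}_{G/K})_{K/K}\,=\, {\mathfrak h}\otimes \bigwedge^2 {\mathfrak p}_-$, and under the identifications above it is precisely ${\mathbf m}_-(\varphi\otimes\varphi)$; since $\theta\bigwedge\theta$ is $G$--invariant and $G$ acts transitively, its vanishing at one point is equivalent to its vanishing everywhere. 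Third, write the Dolbeault operator $\overline{\partial}'$ on $\text{ad}(E_H)\otimes \Omega^{1,0}_{G/K}$ as $\overline{\partial}'_0 + \beta$, where $\overline{\partial}'_0$ is the operator of the tautological holomorphic structure from Theorem \ref{thm0}; invariance of $\theta$ gives $\overline{\partial}'_0(\theta)\,=\, 0$ (invariant sections are holomorphic for the tautological structure), while holomorphicity of $\theta$ gives $\overline{\partial}'(\theta)\,=\, 0$, so evaluating at $K/K$ yields ${\mathbf m}(\beta\otimes\varphi)\,=\, 0$. Here one must check that the map ${\mathbf m}$ of \eqref{bm} is the correct bilinear map describing the action of $\beta\, \in\, {\mathfrak h}\otimes{\mathfrak p}_+$ on $\varphi\, \in\, {\mathfrak h}\otimes{\mathfrak p}_-$, landing in ${\mathfrak h}\otimes{\mathfrak p}_+\otimes{\mathfrak p}_-$; the ${\mathfrak p}_+$ factor of $\beta$ is the $\Omega^{0,1}$--slot, the ${\mathfrak p}_-$ of $\varphi$ is the $\Omega^{1,0}$--slot, and the output slot ${\mathfrak p}_+\otimes{\mathfrak p}_-\,=\,({\mathfrak p}_-)^*\otimes({\mathfrak p}_+)^*$ is exactly $\Omega^{0,1}\otimes\Omega^{1,0}$, as required for $\overline{\partial}'(\theta)(K/K)$.

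For the converse, start from a triple $(\eta,\, \beta,\, \varphi)$ with $\beta\, \in\, ({\mathfrak h}\otimes {\mathfrak p}_+)^K$, $\varphi\, \in\, ({\mathfrak h}\otimes {\mathfrak p}_-)^K$, and the three vanishing relations. By Theorem \ref{thm1} the pair $(\eta,\, \beta)$ gives a homogeneous holomorphic $H$--bundle $(E_H,\, \rho)$. Since $\varphi$ is $K$--invariant, there is a unique $G$--invariant $C^\infty$ section $\theta$ of $\text{ad}(E_H)\otimes \Omega^{1,0}_{G/K}$ with $\theta(K/K)\,=\, \varphi$ (the evaluation-at-$K/K$ map on $G$--invariant sections is a bijection onto the $K$--fixed subspace of the fiber, since $G$ acts transitively). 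The relation ${\mathbf m}_-(\varphi\otimes\varphi)\,=\, 0$ gives $\theta\bigwedge\theta\,=\, 0$ by $G$--invariance, and the relation ${\mathbf m}(\beta\otimes\varphi)\,=\, 0$ gives $\overline{\partial}'(\theta)(K/K)\,=\, 0$, hence $\overline{\partial}'(\theta)\,=\, 0$ by $G$--invariance, so $\theta$ is holomorphic. Finally one verifies that the two constructions are mutually inverse and respect isomorphism of based objects, exactly as in Theorem \ref{thm2}. I expect the only delicate point to be the bookkeeping in the previous paragraph, namely confirming that the exterior-multiplication map ${\mathbf m}_-$ and the Lie-bracket map ${\mathbf m}$ are literally the fiberwise descriptions of $\theta\wedge\theta$ and of the $\beta$--component of $\overline{\partial}'(\theta)$ under the chosen identifications $(\Omega^{1,0}_{G/K})_{K/K}\,=\,{\mathfrak p}_-$; once the $Z_K$--weights are matched this is forced, but it is the step that requires care rather than routine computation.
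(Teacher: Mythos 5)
Your proposal is correct and follows exactly the route the paper takes: the paper's own proof consists of the single observation that $(\Omega^{1,0}_{G/K})_{K/K}=(T^{1,0}_{K/K}(G/K))^*={\mathfrak p}_+^*={\mathfrak p}_-$ followed by "the proof is similar to that of Theorem \ref{thm2}; we omit the details." You have simply written out those omitted details, including the correct weight/slot bookkeeping for ${\mathbf m}_-$ and ${\mathbf m}$, in the way the authors intended.
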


\begin{proof}
Consider the fiber $(\Omega^{1,0}_{G/K})_{K/K}$ of $(\Omega^{1,0}_{G/K})$ over
$K/K\,\in\, G/K$. We note that
$$
(\Omega^{1,0}_{G/K})_{K/K}\,=\, (T^{1,0}_{K/K}(G/K))^*\,=\, {\mathfrak p}^*_+\,=\,
{\mathfrak p}_-\, .
$$
Now the proof of the theorem is similar to that of Theorem \ref{thm2}. We omit the details.
\end{proof}

Take two triples $(\eta,\, \beta,\, \varphi)$ and $(\eta',\, \beta',\, \varphi')$, where
$\eta,\, \eta'\, :\, K\, \longrightarrow\, H$ are homomorphisms,
$$\beta\, \in\, ({\mathfrak h}\otimes {\mathfrak p}_+)^K
\ \ and \ \ \varphi \, \in\, ({\mathfrak h}\otimes {\mathfrak p}_-)^K$$
with respect to $\eta$
and
$$\beta'\, \in\, ({\mathfrak h}\otimes {\mathfrak p}_+)^K
\ \ and \ \ \varphi' \, \in\, ({\mathfrak h}\otimes {\mathfrak p}_-)^K$$
with respect to $\eta'$, such that
$$
{\mathbf m}_+(\beta\otimes\beta)\,=\, 0\,=\, {\mathbf m}_-(\varphi\otimes\varphi)\,=\,
{\mathbf m}(\beta\otimes\varphi)
$$
and
$$
{\mathbf m}_+(\beta'\otimes\beta')\,=\, 0\,=\, {\mathbf m}_-(\varphi'\otimes\varphi')\,=\,
{\mathbf m}(\beta'\otimes\varphi')\, ,
$$
where ${\mathbf m}_+$, ${\mathbf m}_-$ and ${\mathbf m}$ are constructed in \eqref{mpl},
\eqref{mpl2} and \eqref{bm} respectively. We will say that
$(\eta,\, \beta,\, \varphi)$ and $(\eta',\, \beta',\, \varphi')$ are \textit{equivalent}
if there is an element $h\, \in\, H$ such that
\begin{itemize}
\item $\eta'(g)\, =\, h^{-1}\eta(g) h$ for all $g\, \in\, K$,

\item $\beta'\,=\, (\text{ad}(h)\otimes {\rm Id}_{{\mathfrak p}_+})(\beta)$, and.

\item $\varphi'\,=\, (\text{ad}(h)\otimes {\rm Id}_{{\mathfrak p}_-})(\varphi)$.
\end{itemize}

Note that the above equivalence relation is very similar to the equivalence relation used in
Corollary \ref{cor2}.

Theorem \ref{prop2} has the following analog of Corollary \ref{cor2}.

\begin{corollary}\label{cor3}
There is a natural bijection between the following two:
\begin{enumerate}
\item Isomorphism classes of homogeneous principal Higgs $H$--bundles on $G/K$.

\item Equivalence classes of triples $(\eta,\, \beta,\, \varphi)$, where
$\eta\, :\, K\, \longrightarrow\, H$ is a homomorphism,
$$\beta\, \in\, ({\mathfrak h}\otimes {\mathfrak p}_+)^K
\ \ and \ \ \varphi \, \in\, ({\mathfrak h}\otimes {\mathfrak p}_-)^K$$
such that
$$
{\mathbf m}_+(\beta\otimes\beta)\,=\, 0\,=\, {\mathbf m}_-(\varphi\otimes\varphi)\,=\,
{\mathbf m}(\beta\otimes\varphi)\, ,
$$
where ${\mathbf m}_+$, ${\mathbf m}_-$ and ${\mathbf m}$ are constructed in \eqref{mpl},
\eqref{mpl2} and \eqref{bm} respectively.
\end{enumerate}
\end{corollary}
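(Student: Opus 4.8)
The plan is to deduce Corollary~\ref{cor3} from Theorem~\ref{prop2} in exactly the same way that Corollary~\ref{cor2} follows from Theorem~\ref{thm2} (and Corollary~\ref{cor1} from Theorem~\ref{thm1}). Theorem~\ref{prop2} already sets up a bijection between isomorphism classes of \emph{based} homogeneous Higgs $H$--bundles and triples $(\eta,\, \beta,\, \varphi)$ satisfying the stated quadratic relations. The only remaining issue is to understand what happens to the triple when the base point is moved, and to check that forgetting the base point corresponds precisely to passing to the equivalence classes defined just before the corollary.

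First I would observe that any two choices of base point on a fixed homogeneous Higgs $H$--bundle $(E_H,\, \rho,\, \theta)$ over $K/K$ differ by the right action of a unique element $h\, \in\, H$: if $z_0\, \in\, (E_H)_{K/K}$ and $z_0' \,=\, z_0 h$, then I would run the computation already carried out in the proof of Corollary~\ref{cor1}, namely $z_0 h\,\eta'(k) \,=\, \rho(k,\, z_0 h)\,=\, \rho(k,\, z_0)h \,=\, z_0\,\eta(k)\,h$, which gives $\eta'(k)\,=\, h^{-1}\eta(k)h$ for all $k\, \in\, K$. Next I would track $\beta$ and $\varphi$ under this change: since the identifications $\text{ad}(E_H)_{K/K}\,\cong\,\mathfrak h$ used in Theorem~\ref{prop2} are the ones coming from the base point via $v\,\mapsto\,(z_0,\, v)$, replacing $z_0$ by $z_0 h$ twists this identification by $\text{ad}(h)$, while the identifications $T^{1,0}_{K/K}(G/K)\,\cong\,{\mathfrak p}_+$ and $(\Omega^{1,0}_{G/K})_{K/K}\,\cong\,{\mathfrak p}_-$ (from Section~\ref{se2.2} and the proof of Theorem~\ref{prop2}) are independent of the base point. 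Hence the new evaluations are $\beta' \,=\, (\text{ad}(h)\otimes \mathrm{Id}_{{\mathfrak p}_+})(\beta)$ and $\varphi' \,=\, (\text{ad}(h)\otimes \mathrm{Id}_{{\mathfrak p}_-})(\varphi)$. These are exactly the relations defining the equivalence on triples stated before the corollary, so changing the base point on a fixed bundle moves the triple within a single equivalence class.

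Conversely, I would check that an isomorphism of (unbased) homogeneous Higgs $H$--bundles $\alpha\,:\, E'_H\,\longrightarrow\, E''_H$, together with \emph{any} choice of base points $z'_0$ and $z''_0$, produces an element $h\, \in\, H$ with $\alpha(z'_0)\,=\, z''_0 h$; composing $\alpha$ with right translation by $h$ then gives a based isomorphism from $(E'_H,\, z'_0 h)$... more precisely, $\alpha$ carries $(E'_H,\, \rho',\, \theta',\, z'_0)$ to $(E''_H,\, \rho'',\, \theta'',\, z''_0 h)$, which by the based classification of Theorem~\ref{prop2} has the same triple as $(E''_H,\, z''_0)$ up to the base-point change analyzed above. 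Thus two based bundles give equivalent triples if and only if the underlying unbased bundles are isomorphic, and every equivalence class of triples is realized. Assembling these observations yields the claimed bijection, and the argument is entirely formal given Theorem~\ref{prop2}.

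The only mild subtlety — the ``hard part,'' insofar as there is one — is bookkeeping the two different roles of ${\mathfrak p}_\pm$: the Higgs field lives in $\text{ad}(E_H)\otimes\Omega^{1,0}_{G/K}$ with fiber $\mathfrak h\otimes{\mathfrak p}_-$, while the holomorphic-structure parameter $\beta$ lives in $\mathfrak h\otimes{\mathfrak p}_+$, so one must be careful that the conjugating element $h$ acts by $\text{ad}(h)$ on the $\mathfrak h$-factor of both and trivially on the respective ${\mathfrak p}_\pm$-factors, and that the quadratic constraints ${\mathbf m}_+,\,{\mathbf m}_-,\,{\mathbf m}$ are all manifestly $\text{ad}(H)$-equivariant so that the equivalence relation is well defined on the constraint locus. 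Since $\text{ad}(h)$ is a Lie algebra automorphism and acts as the identity on ${\mathfrak p}_\pm$, each of ${\mathbf m}_+(\beta\otimes\beta)$, ${\mathbf m}_-(\varphi\otimes\varphi)$, ${\mathbf m}(\beta\otimes\varphi)$ transforms by $(\text{ad}(h)\otimes\mathrm{Id})$ and in particular vanishing is preserved, so this presents no real obstacle. I would therefore simply remark that the proof is ``similar to that of Corollary~\ref{cor1} (or Corollary~\ref{cor2}), using Theorem~\ref{prop2} in place of Theorem~\ref{thm1}'' and omit the repetitive details.
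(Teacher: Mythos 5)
Your proposal is correct and follows exactly the route the paper intends: Corollary \ref{cor3} is deduced from Theorem \ref{prop2} by the same base-point-change computation used in the proof of Corollary \ref{cor1} (yielding $\eta'(k) = h^{-1}\eta(k)h$ and the $\mathrm{ad}(h)\otimes\mathrm{Id}$ twist on $\beta$ and $\varphi$), which the paper leaves implicit by declaring the corollary an analogue of Corollary \ref{cor2}. Your additional check that the constraints ${\mathbf m}_+,\,{\mathbf m}_-,\,{\mathbf m}$ are $\mathrm{ad}(H)$-equivariant, so the equivalence relation respects the constraint locus, is a sensible (if routine) completion of the bookkeeping.
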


\section{Remark on moduli spaces}

The classifications above do not distinguish between (semi)stable and unstable objects in the 
sense of geometric invariant theory. That being said, Theorem \ref{prop2}
and Theorem \ref{thm2} endow the sets of isomorphism classes of based homogeneous principal
Higgs and co-Higgs 
$H$-bundles on $G/K$ with the structures of algebraic varieties. In the Higgs case, this space 
is the subspace of
$$
{\rm Hom}(K,\, H)\times(\mathfrak h\otimes\mathfrak p_+)^K\times(\mathfrak 
h\otimes\mathfrak p_-)^K
$$
consisting of all $(\eta,\, \beta,\, \varphi)$ satisfying the algebraic equations
$$
{\mathbf m}_+(\beta\otimes\beta)\,=\, 0\,=\, {\mathbf m}_-(\varphi\otimes\varphi)\,=\,
{\mathbf m}(\beta\otimes\varphi)\, .
$$
The space of isomorphism classes of homogeneous based principal
co-Higgs $H$-bundles on $G/K$ is the subspace of
$$
\mbox{Hom}(K,\, H)\times(\mathfrak h\otimes\mathfrak p_+)^K
\times ({\mathfrak h}\otimes {\mathfrak p}_+)^K
$$
defined by the locus of solutions $(\eta,\, \beta,\, \varphi)$ of the algebraic equations
$$
{\mathbf m}_+(\beta\otimes\beta)\,=\, {\mathbf m}_+(\varphi\otimes\varphi)\,=\,0\,=\,
{\mathbf m}(\beta\otimes\varphi)\, .
$$
In either case, the subspace can be regarded as a moduli space for the associated 
classification problem.

Under the assumption that the group $H$ is an affine algebraic group defined over $\mathbb C$, it is possible to construct from
Corollary \ref{cor2} (respectively, Corollary \ref{cor3}) a geometric-invariant-theoretic moduli space of
isomorphism classes of homogeneous principal co-Higgs (respectively, Higgs) bundles.

\section*{Acknowledgements}

The authors thank the anonymous referee for helpful comments and acknowledge Arghya Mondal for useful discussions.  The first-named author thanks
the Centre de recherches math\'ematiques (CRM), Montr\'eal, for hospitality while
this work was carried out. He is partially supported
by a J. C. Bose Fellowship.  The second-named author is partially supported by an NSERC Discovery Grant and by the Canadian Tri-Agency New Frontiers in Research Fund (Exploration Stream).


\end{document}